\numberwithin{equation}{section}
\DeclareMathOperator{\e}{e}
\newtheorem{definicao}{Definition}[section]
\newtheorem{teorema}{Theorem}[section]
\newtheorem{corolario}[teorema]{Corollary}
\newtheorem{lema}[teorema]{Lemma}
\newtheorem{rem}[teorema]{Remark}
\newcounter{exemplo}[section]
\newcommand{\exemplo}{\stepcounter{exemplo}
	\noindent\textbf{Example \arabic{section}.\arabic{exemplo}. }}
\newcommand{\er}{\mathbb{R}}
\newcommand{\en}{\mathbb{N}}
\newcommand{\Ne}{\mathrm{e}}
\newcommand{\Es}{\hspace{0.5cm}}
\newcommand{\ov}{\overline}
\newcommand{\un}{\underline}
\newcommand{\dst}{\displaystyle}
\newcommand{\til}[1]{\widetilde{#1}}
\begin{document}
	
	\begin{center}
		{\bf {\Large 		Convergence of asymptotic systems in Cohen--Grossberg neural network models with unbounded delays
}}
	\end{center}

		\begin{center}
		A. Elmwafy$^\ddag$, Jos\'{e} J. Oliveira$^*$, C\'esar M. Silva$^\dag$
	\end{center}

\begin{center}
	($\ddag$) Centro de Matem\'{a}tica e Aplica\c{c}\~{o}es (CMA-UBI),\\
	Universidade da Beira Interior, 6201-001 Covilh\~{a}, Portugal\\
	e-mail: ahmed.elmwafy@ubi.pt\\
	
	($*$) Centro de Matem\'{a}tica (CMAT), Departamento de Matem\'{a}tica,\\
	Universidade do Minho, Campus de Gualtar, 4710-057 Braga, Portugal\\
	e-mail: jjoliveira@math.uminho.pt\\

	($\dag$) Centro de Matem\'{a}tica e Aplica\c{c}\~{o}es (CMA-UBI), Departamento de Matem\'{a}tica,\\
	Universidade da Beira Interior, 6201-001 Covilh\~{a}, Portugal\\
	e-mail: csilva@ubi.pt\\
	
\end{center}

	\begin{abstract}
			In this paper, we investigate the convergence of asymptotic systems in non-autonomous Cohen--Grossberg neural network models, which include both infinite discrete time-varying and distributed delays. We derive stability results under conditions where the non-delay terms asymptotically dominate the delay terms. Several examples and a numerical simulation are provided to illustrate the significance and novelty of the main result.
	\end{abstract}
	
	\noindent
	{\it Keywords}:\\Cohen--Grossberg neural network, Global convergence, Asymptotic systems, Infinite discrete delay, Infinite distributed delay.\\
	
	\noindent
	{\it Mathematics Subject Classification System 2020}: 34K14, 34K20, 34K25, 34K60, 92B20.

	\section{Introduction}\label{introducao}
	In the past few decades, neural networks have garnered significant attention due to their versatile applications such as in image and signal processing \cite{aizenberg+aizenberg+hiltner+moraga+meyer}, pattern recognition \cite{yao1990model}, optimization \cite{park2005lmi}, and content-addressable memory \cite{hartman}.

 Among the various neural network models that have been extensively investigated and applied, Cohen--Grossberg type models play a pivotal role. In their pioneering work \cite{cohen1983absolute}, Cohen and Grossberg introduced and investigated the stability of the following system of ordinary differential equations,
\begin{align}
x_i'(t)= -a_i(x_i(t))\left[b_i(x_i(t))-\sum_{j=1}^{n} c_{ij}f_j(x_j(t))+I_i\right],\hspace{0.2cm} t\geq 0,\hspace{0.1cm} i=1,\dots,n
\end{align}
where $n$ is a natural number indicating the number of neurons and, for each $i,j=1,\dots,n$, $x_i(t)$ represents the $i$th neuron state at time t, $a_i(u)$ denotes the amplification function, $b_i(u)$ is the self-signal function, $f_j(u)$ is the activation function, $c_{ij}$ represents the strength of connectivity between neurons $i$ and $j$, $I_i$ denotes the external input to the system.

Since their introduction, Cohen--Grossberg neural network (CGNN) models have become a prominent subject of investigation. The dynamical properties of CGNNs, such as stability, instability, and periodic oscillation, have been extensively studied for both theoretical and practical applications. For instance, in \cite{eliasmith2005unified}, the author examined the global convergence of the model to ensure that the trajectories of the network  do not exhibit chaotic behavior, thereby maintaining its functionality as an associative memory or optimization solver. Furthermore, the globally convergent dynamics indicate that the neural network algorithm will ensure convergence to an optimal solution from any initial guess when used as an optimization solver.

Several significant studies have been conducted in this area. For an autonomous CGNN model,  sufficient conditions were given for the coexistence and local $\mu-$stability of multiple equilibrium points \cite{lui+tan+xu}. In \cite{aouiti2020new}, the authors established sufficient conditions for the existence and global exponential stability of an almost automorphic solution of an interval general Cohen--Grossberg Bidirectional Associative Memory neural network with finite discrete delays and infinite distributed delays. In \cite{chen2019multiple}, the authors studied the $\mu-$stability and instability of an equilibrium point of a CGNN model with infinite discrete time-varying delays. 
 The existence and exponential stability of a periodic solution of a  general high-order CGNN model with infinite discrete time-varying and distributed delays were studied in \cite{submitted}. The existence of multi-periodic solutions of a generalized CGNN was studied in \cite{wang+huang}. 
 The existence and exponential stability of an almost periodic solution of a CGNN with infinite distributed delays were explored in \cite{xiang+cao}. In \cite{chen2010global}, the global exponential stability of a periodic solution of a delayed CGNN  was investigated but in the case of discontinuous activation functions. Additionally, a stochastic CGNN with delays was studied in \cite{wu+yang+ren}, and criteria for the exponential stability of a discrete time high-order CGNN model with impulses were established in \cite{dong2021global}.

To our knowledge, there are few studies on the global convergence of systems  in the context of neural network models \cite{oliveira2017convergence,xiao+zhang,yuan+huang+hu+liu,yuan+yuan+huang+hu,zhou+li+zhang}. As illustrated by a simple numerical example in \cite{yuan+yuan+huang+hu}, the dynamic behavior of a system is generally not determined by the dynamics of its asymptotic systems (see Definition \ref{def:asymptotic-system}). Therefore, it is valuable to identify situations where the dynamics of a system can be inferred by studying the dynamics of one of its asymptotic systems. In \cite{oliveira2017convergence,xiao+zhang,yuan+yuan+huang+hu,zhou+li+zhang}, sufficient conditions are provided to ensure the global convergence of systems in various delayed Hopfield neural network models. In \cite{yuan+huang+hu+liu}, the convergence of systems was examined for the following low-order CGNN with finite discrete time-varying delays:
 
\begin{eqnarray*} \label{model-yuan+huang+hu+liu}
 	x'_i(t)&=&a_i(x_i(t))\bigg[-b_i(t,x_i(t))+\sum_{j=1}^{n}c_{ij}(t)f_j\big(x_j(t)\big)+\sum_{j=1}^nb_{ij}(t)f_j(t-\tau_{ij}(t))+I_i(t)\bigg],
 \end{eqnarray*}
where the coefficients asymptotically  converge to periodic functions. Motivated by these studies, in this paper we investigate the convergence of systems in the following generalized high-order CGNN model with both infinite discrete time-varying and distributed delays:
\begin{eqnarray} \label{1}
x'_i(t)&=&a_i(t,x_i(t))\bigg[-b_i(t,x_i(t))+F_i\left(\dst\sum_{p=1}^{P}\sum_{j,l=1}^{n}c_{ijlp}(t)h_{ijlp}\big(x_j(t-\tau_{ijp}(t)),x_l(t-\til{\tau}_{ilp}(t))\big)\right.,\nonumber\\
& &\dst\sum_{p=1}^{P}\sum_{j,l=1}^{n}d_{ijlp}(t)f_{ijlp}\left(\int_{-\infty}^0g_{ijp}(x_j(t+s))d\eta_{ijp}(s),\int_{-\infty}^0\til{g}_{ilp}(x_l(t+s))d{\til{\eta}}_{ilp}(s)\right)\Bigg)\nonumber\\
& &+I_i(t)\bigg],\Es t\geq0,\,i=1,\ldots,n,
\end{eqnarray} 
where $n,P\in\en$, and $a_i:[0,+\infty)\times\er\to(0,+\infty)$, $b_i:[0,+\infty)\times\er\to\er$,  $c_{ijlp},d_{ijlp},I_i:[0,+\infty)\to\er$, $\tau_{ijp}, \til \tau_{ilp}:[0,+\infty)\to[0,+\infty)$,  $F_i, h_{ijlp},f_{ijlp}:\er^2\to\er$, $g_{ijp},\til g_{ilp}:\er\to\er$ are continuous functions, and $\eta_{ijp},\til \eta_{ilp}:(-\infty,0]\to\er$ are non-decreasing bounded functions such that $\eta_{ijp}(0)-\eta_{ijp}(-\infty)=\til\eta_{ilp}(0)-\til\eta_{ilp}(-\infty)=1$, for each $i,j,l=1,\ldots,n$, $p=1,\ldots,P$.

We note that model \eqref{1} is sufficiently general to encompass both low-order \cite{yuan+huang+hu+liu} and high-order \cite{submitted} CGNN models, Hopfield models \cite{oliveira2017convergence}, and static models \cite{ncube2020existence}.

The main objective is to achieve the global stability of model \eqref{1} through the stability of one of its asymptotic systems, which may be autonomous, periodic, or almost periodic, and thus easier to study than the original system \eqref{1}. For instance, model \eqref{1} itself may not be periodic, but it can have a periodic asymptotic system. The goal is to provide sufficient conditions to ensure that all solutions of \eqref{1} converge to a periodic function if the asymptotic system has a globally attractive periodic solution. The numerical example presented at the end of the paper illustrates this scenario.

Now, we provide an outline of the contents of this paper. Following the Introduction, Section 2 is a preliminary section where we introduce our notation and hypotheses. In Section 3, we investigate the global convergence of systems in CGNN type models. Section 4 includes some applications of the main result to some low-order and high-order CGNN models and offers a meaningful comparison with previous studies in the literature. In Section 5, we present a numerical simulation to illustrate the effectiveness and easier application of the main theoretical result. Finally, in Section 6, we conclude with a brief summary of the main novelties presented in this paper.

	\section{Preliminaries and model description}\label{not+basicResults}


In the present paper, we deal with the n-dimensional vector space $\er^n$, we consider the space of all bounded continuous functions $\phi:(-\infty,0]\rightarrow \er^n$ to be denoted by $BC=BC((-\infty,0];\er^n)$, equipped by the norm $\|\phi\|=\sup\limits_{s\leq 0} |\phi(s)|$, such that $|.|$ is the standard maximum norm in $\er^n$ $|x|=\max \{|x_i| \,: i=1,\dots, n\}$ for $x=(x_1,\dots,x_n)\in\mathbb{R}^n$. For a real sequence $(u_n)_{n\in \en}$, we write $u_n\nearrow +\infty$, to indicate that $u_n$ is an increasing sequence such that $\lim\limits_{n\to +\infty}u_n=+\infty$. A vector $x=(x_1,\dots,x_n)\in \er^n$ is said to be positive, denoted by $x>0$, if $x_i>0$ for all $i=1,\dots,n$.

For an open set $D$ in $BC$ and a continuous function $f:[0,\infty)\times D\rightarrow \er^n$, we consider the general setting of the retarded functional differential equation given by 
 \begin{align}\label{FDE}
\begin{split}
    x'(t)=f(t,x_t), \Es t\geq 0,
\end{split}
\end{align}
 where $x_t$ is the function $x_t:(-\infty,0]\rightarrow \er^n$ given by $x_t(s)=x(t+s)$ for $s\leq 0$ and $t\geq 0$. A solution of \eqref{FDE} on an interval $I$ in $\er$ is defined to be a function $x:(-\infty,\sup I)\rightarrow \er^n$ such that $x_t$ is in $D$, $x(t)$ is continuous differentiable, and \eqref{FDE} holds on $I$(see \cite{hale1978phase}). 

 It is generally known that the Banach space $BC$ is not a convenient phase space for \eqref{FDE} according to \cite{haddock1988precompactness,hale1978phase}, and so the typical results about existence, uniqueness, and continuous dependency of solutions are not accessible. Thus, we consider the following Banach space 
\begin{eqnarray*}
    {UC}_{g}=\left\{\phi \in C\left((-\infty,0];\er^n\right):\, \sup\limits_{s\leq 0}\frac{|\phi(s)|}{g(s)}< +\infty, \frac{\phi(s)}{g(s)} \text{ is uniformly continuous on } (-\infty,0]\right\},
\end{eqnarray*}
 equipped with the norm $\|\phi\|_g=\sup\limits_{s\leq 0}\frac{|\phi(s)|}{g(s)},$
 where $g:(-\infty,0]\rightarrow [1,+\infty)$ is a function satisfying the following conditions:
 \begin{enumerate}[(g1).]
     \item $g$ is a non-increasing continuous function with $g(0)=1$;
     \item $\lim\limits_{u\rightarrow 0^-}\frac{g(u+s)}{g(s)}=1$ uniformly on $(-\infty,0]$;
     \item $g(s)\rightarrow +\infty $ as $s\rightarrow -\infty.$
 \end{enumerate}
 For more details, see \cite{hale1978phase}.

As $BC\subseteq{UC}_{g}$, then we consider the space $BC$ with the norm $\|.\|_g$. Considering the functional differential equation \eqref{FDE} in the phase space ${UC}_{g}$, for a function $g$ that satisfies (g1)-(g3) as mentioned above, the continuity of the function $f$ assures the existence of solutions of \eqref{FDE} with initial condition $$x_{t_0}=\phi,$$ for $t_0\geq 0$ and $\phi\in {UC}_{g}$ \cite{hale1978phase}.





Here, we restrict our study to solutions of \eqref{1} with bounded initial condition, i.e.
\begin{align} \label{1-IC}
	\begin{split}
		x_{t_0}=\phi,
	\end{split} 
\end{align}  
for some $t_0\geq0$ and $\phi\in BC$.

For the system \eqref{1}, the following hypotheses will be considered:
\begin{enumerate}[H1.]
	\item For each $i,j,l=1,\dots,n$, $p=1,\ldots,P$, and $u\in\er$, the functions $c_{ijlp},d_{ijlp},I_i:[0,+\infty)\to\er$, $b_i(\cdot,u):[0,+\infty)\to\er$ are bounded;
 
	\item For each $i=1,\ldots,n$, there exist $\un{a}_i,\ov{a}_i>0$ and a function $A_i:[0,+\infty)\to\er$ such that
	$$
	   \un{a}_i\leq a_i(t,v)\leq\ov{a}_i\Es\text{and}\Es A_i(t)a_i^2(t,v)\leq\frac{\partial}{\partial t}a_i(t,v),\Es t\geq 0,\,v\in\er;
	$$
 
	\item For each $i=1,\dots,n$, there exists a continuous function $\beta_i:[0,+\infty)\rightarrow[0,+\infty)$  such that
	$$
	  \frac{b_i(t,u)-b_i(t,v)}{u-v}\geq \beta_i(t) \Es t\geq 0,\,  u,v \in\mathbb{R}\text{ with } u\neq v;
	$$
	\item For each $i,j,l=1,\dots,n$ and $p=1,\dots,P$,
	$$
	  \lim_{t\to+\infty}\left(t-\tau_{ijp}(t)\right)= \lim_{t\to+\infty}\left(t-\til\tau_{ilp}(t)\right)=+\infty;
	$$
	\item For each $i,j,l=1,\dots,n$ and $p=1,\dots,P$, there are  $\gamma^{(1)}_{ijlp},\gamma^{(2)}_{ijlp},\mu^{(1)}_{ijlp},\mu^{(2)}_{ijlp}>0$  such that 
	\begin{eqnarray}
	  |h_{ijlp}(u_1,u_2)-h_{ijlp}(v_1,v_2)|\leq \gamma^{(1)}_{ijlp}|u_1-v_1|+\gamma^{(2)}_{ijlp}|u_2-v_2| \nonumber\\
	   |f_{ijlp}(u_1,u_2)-f_{ijlp}(v_1,v_2)|\leq \mu^{(1)}_{ijlp}|u_1-v_1|+\mu^{(2)}_{ijlp}|u_2-v_2|\nonumber
	\end{eqnarray}
	for all $u_1,u_2,v_1,v_2\in\er$;
	\item For each $i,j=1,\dots,n$ and $p=1,\dots,P$, there are  $\xi_{ijp},\til\xi_{ijp},\zeta_{i}, \varsigma_i>0$ such that 
	$$
	\begin{array}{ll}
		|g_{ijp}(u_1)-g_{ijp}(v_1)|\leq \xi_{ijp}|u_1-v_1|,
		\Es|\til{g}_{ijp}(u_1)-\til{g}_{ijp}(v_1)|\leq \til{\xi}_{ijp}|u_1-v_1|,\\
		|F_i(u_1,u_2)-F_i(v_1,v_2)|\leq \zeta_i|u_1-v_1|+\varsigma_{i}|u_2-v_2|,
	\end{array}
	$$
	for all $u_1,u_2,v_1,v_2\in\er$.
	\item There exists $d=(d_1,\dots,d_n)>0$ such that, for each $i=1,\dots,n$,
	\begin{eqnarray}
	\limsup_{t\rightarrow +\infty} \bigg[-\un{a}_i\big(\beta_i(t)+A_i(t)\big)+\lefteqn{\sum_{p=1}^{P}\sum_{j,l=1}^{n}\Bigg(\zeta_i|c_{ijlp}(t)|\left(\ov{a}_j\frac{d_j}{d_i}\gamma_{ijlp}^{(1)}+\ov{a}_l\frac{d_l}{d_i}\gamma_{ijlp}^{(2)}\right)}\nonumber
		\\
		+&\dst\varsigma_i|d_{ijlp}(t)|\left(\ov{a}_j\dst\frac{d_j}{d_i}\xi_{ijp}\mu_{ijlp}^{(1)}+\ov{a}_l\frac{d_l}{d_i}\til{\xi}_{ilp}\mu_{ijlp}^{(2)}\right)\Bigg)\bigg]<0.\nonumber
	\end{eqnarray}
\end{enumerate}

\begin{rem}
  Note that, if $a_i(t,u)\equiv a_i(u)$ for all $i=1,\ldots,n$, $u\in\er$, and $t\geq0$, then $A_i(t)\equiv0$ fits the second condition of hypothesis H2..
\end{rem}

Denoting, for all $i=1,\ldots,n$, $t\geq0$, and $\varphi=(\varphi_1,\ldots,\varphi_n)\in BC$,
\begin{eqnarray} \label{def-U}
	\begin{array}{rcl}
\mathcal{U}_i(t,\varphi)&=&\dst\sum_{p=1}^{P}\sum_{j,l=1}^{n}c_{ijlp}(t)h_{ijlp}\big(\varphi_j(-\tau_{ijp}(t)),\varphi_l(-\til{\tau}_{ilp}(t))\big),\\
\mathcal{V}_i(t,\varphi)&=&\dst\sum_{p=1}^{P}\sum_{j,l=1}^{n} d_{ijlp}(t)f_{ijlp}\left(\int_{-\infty}^0g_{ijp}(\varphi_j(s))d\eta_{ijp}(s),\int_{-\infty}^0\til{g}_{ilp}(\varphi_l(s))d{\til \eta}_{ilp}(s)\right),\Es
\end{array}
\end{eqnarray} 
system \eqref{1} can be written as
$$
  x'_i(t)=a_i(t,x_i(t))\big[-b_i(t,x_i(t))+F_i\left(\mathcal{U}_i(t,x_t),\mathcal{V}_i(t,x_t)\right)+I_i(t)\big],\Es t\geq0,\,i=1,\ldots,n.
$$

\begin{definicao}\label{def:asymptotic-system}
The system 
\begin{eqnarray} \label{def-asymp-system}
	x'_i(t)&=&a_i(t,x_i(t))\bigg[-\hat {b}_i(t,x_i(t))+F_i\left(\dst\sum_{p=1}^{P}\sum_{j,l=1}^{n}\hat{c}_{ijlp}(t)h_{ijlp}\big(x_j(t-\hat{\tau}_{ijp}(t)),x_l(t-\hat{\til{\tau}}_{ilp}(t))\big)\right.,\nonumber\\
& &\dst\sum_{p=1}^{P}\sum_{j,l=1}^{n}\hat{d}_{ijlp}(t)f_{ijlp}\left(\int_{-\infty}^0g_{ijp}(x_j(t+s))d\eta_{ijp}(s),\int_{-\infty}^0\til{g}_{ilp}(x_l(t+s))d{\til{\eta}}_{ilp}(s)\right)\Bigg)\nonumber\\
& &+\hat I_i(t)\bigg],\Es t\geq0,\,i=1,\ldots,n,
\end{eqnarray} 
is said to be asymptotic system of (\ref{1}) if, for each $i,j,l=1,\ldots,n$ and $p=1,\ldots,P$, the functions $\hat b_i:[0,+\infty)\times\er\to\er$,  $\hat c_{ijlp},\hat d_{ijlp},\hat I_i:[0,+\infty)\to\er$, and $\hat \tau_{ijp}, \hat{\til{\tau}}_{ilp}:[0,+\infty)\to[0,+\infty)$ are continuous such that $\hat{b}_i$ satisfies H3., i.e there is a continuous function $\hat{\beta}_i:[0,+\infty)\to[0,+\infty)$ verifying
 \begin{align}
     \label{hyp-H3^}
 	\frac{\hat{b}_i(t,u)-\hat{b}_i(t,v)}{u-v}\geq \hat{\beta}_i(t) \Es t\geq 0,\, u,v \in\mathbb{R}\text{ with }  u\neq v,
 \end{align}
 
 and 
\begin{eqnarray}\label{asymptotic-system}
\begin{array}{rcl}
\lim\limits_{t\to+\infty} \big(\beta_i(t)-\hat{\beta}_i(t)\big) &=& \lim\limits_{t\to+\infty} \big(b_i(t,w(t))-\hat{b}_i(t,w(t))\big)=\lim\limits_{t\to+\infty}\big(c_{ijlp}(t)-\hat{c}_{ijlp}(t)\big)\\
\\
&=&\lim\limits_{t\to+\infty}\big(d_{ijlp}(t)-\hat{d}_{ijlp}(t)\big)=\lim\limits_{t\to+\infty}\big(\tau_{ijp}(t)-\hat{\tau}_{ijp}(t)\big)\\
\\
 &=&\lim\limits_{t\to+\infty}\big(I_{i}(t)-\hat{I}_{i}(t)\big)=\lim\limits_{t\to+\infty}\big(\til \tau_{ilp}(t)-\hat{\til \tau}_{ilp}(t)\big)=0,
\end{array}
\end{eqnarray}
for every bounded continuous function $w:\er\to\er$.
\end{definicao}

Denoting, for all $i=1,\ldots,n$, $t\geq0$, and $\varphi=(\varphi_1,\ldots,\varphi_n)\in BC$,
\begin{eqnarray} \label{def-hat-U}
		\begin{array}{rcl}
			\hat{\mathcal{U}}_i(t,\varphi)&=&\dst\sum_{p=1}^{P}\sum_{j,l=1}^{n}\hat{c}_{ijlp}(t)h_{ijlp}\big(\varphi_j(-\hat{\tau}_{ijp}(t)),\varphi_l(-\hat{{\tau}}_{ilp}(t))\big),\\
\hat{\mathcal{V}}_i(t,\varphi)	&=&\dst \sum_{p=1}^{P}\sum_{j,l=1}^{n}\hat{d}_{ijlp}(t)f_{ijlp}\left(\int_{-\infty}^0g_{ijp}(\varphi_j(s))d\eta_{ijp}(s),\int_{-\infty}^0{g}_{ilp}(\varphi_l(s))d{\til\eta}_{ilp}(s)\right),\Es
\end{array}
\end{eqnarray} 
system \eqref{def-asymp-system} can be written as
$$
x'_i(t)=a_i(t,x_i(t))\big[-\hat{b}_i(t,x_i(t))+F_i\left(\hat{\mathcal{U}}_i(t,x_t),\hat{\mathcal{V}}_i(t,x_t)\right)+\hat{I}_i(t)\big],\Es t\geq0,\,i=1,\ldots,n.
$$
By \eqref{asymptotic-system}, it is obvious that the hypothesis $H7.$ is equivalent to the following
\begin{eqnarray}
	\limsup_{t\rightarrow +\infty} \bigg[-\un{a}_i\big(\hat{\beta}_i(t)+A_i(t)\big)+\lefteqn{\sum_{p=1}^{P}\sum_{j,l=1}^{n}\Bigg(\zeta_i|\hat{c}_{ijlp}(t)|\left(\ov{a}_j\frac{d_j}{d_i}\gamma_{ijlp}^{(1)}+\ov{a}_l\frac{d_l}{d_i}\gamma_{ijlp}^{(2)}\right)}\nonumber
		\\
		+&\dst\varsigma_i|\hat{d}_{ijlp}(t)|\left(\ov{a}_j\dst\frac{d_j}{d_i}\xi_{ijp}\mu_{ijlp}^{(1)}+\ov{a}_l\frac{d_l}{d_i}\til{\xi}_{ilp}\mu_{ijlp}^{(2)}\right)\Bigg)\bigg]<0.\nonumber
	\end{eqnarray}
\section{Global convergence}
Before considering the global convergence of the models, we show that all solutions of \eqref{1}, with bounded initial condition \eqref{1-IC}, are defined on $\er$.
\begin{lema}\label{lemma_1}
Assume H2.-H3. and H5.-H6.. Then a solution $x(t)$ of the initial value problem \eqref{1}-\eqref{1-IC} is defined on $\mathbb{R}$.
\end{lema}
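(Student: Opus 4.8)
Here is how I would approach the proof.

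First, I would note that \eqref{1-IC} forces $x(t)=\phi(t-t_0)$ for $t\le t_0$ with $\phi\in BC$, so the solution is automatically defined on $(-\infty,t_0]$; by the local existence result recalled in Section~\ref{not+basicResults} (continuity of the right-hand side of \eqref{1}, viewed as a map $[0,+\infty)\times UC_g\to\er^n$) it extends to a noncontinuable solution with maximal interval of the form $(-\infty,T)$, $T>t_0$. The plan is to show $T=+\infty$ by contradiction: assuming $T<+\infty$, I would derive an \emph{a priori} bound proving that $x$ stays bounded on $(-\infty,T)$, and then invoke the continuation theorem for \eqref{FDE} in $UC_g$ (\cite{hale1978phase,haddock1988precompactness}) to extend the solution beyond $T$, contradicting maximality.

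For the \emph{a priori} bound, fix $T<+\infty$. On the compact interval $[t_0,T]$ the continuous functions $c_{ijlp}$, $d_{ijlp}$, $I_i$, $b_i(\cdot,0)$ and $\beta_i$ are bounded. Set $\ov{x}(t)=\sup_{s\le t}|x(s)|$, which is finite and nondecreasing on $[t_0,T)$ since $x\equiv\phi(\cdot-t_0)$ on $(-\infty,t_0]$ and is continuous on $[t_0,t]$. By the global Lipschitz hypotheses H5.--H6., each scalar nonlinearity $h_{ijlp},f_{ijlp},g_{ijp},\til{g}_{ilp},F_i$ grows at most linearly in its arguments; moreover, because every $\eta_{ijp}$ and $\til{\eta}_{ilp}$ has total variation $1$, the distributed-delay integrals satisfy $|\int_{-\infty}^0 g_{ijp}(x_j(t+s))\,d\eta_{ijp}(s)|\le|g_{ijp}(0)|+\xi_{ijp}\,\ov{x}(t)$ (and analogously for the tilded terms). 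Combining these estimates bounds $|\mathcal{U}_i(t,x_t)|$ and $|\mathcal{V}_i(t,x_t)|$, hence $|F_i(\mathcal{U}_i(t,x_t),\mathcal{V}_i(t,x_t))|$, by $C_1'+C_2'\,\ov{x}(t)$ on $[t_0,T)$, for constants depending only on $T$ and the data.

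The delicate point is that H3. provides only a one-sided bound on $b_i$, so $b_i(t,\cdot)$ may grow superlinearly and $|x_i'(t)|$ cannot be estimated directly by a linear function of $\ov{x}(t)$. Instead I would estimate $|x_i(t)|$ itself: using $0<a_i(t,v)\le\ov{a}_i$ (H2.) together with the sign of $x_i(t)$ and H3. --- which makes $-b_i(t,x_i(t))$ have the favourable sign whenever $|x_i(t)|$ is large --- one obtains, for every $t\in[t_0,T)$, the Dini-derivative estimate
\[
D^+|x_i(t)|\le\ov{a}_i\big(|b_i(t,0)|+|F_i(\mathcal{U}_i(t,x_t),\mathcal{V}_i(t,x_t))|+|I_i(t)|\big)\le C_1+C_2\,\ov{x}(t).
\]
Integrating from $t_0$, taking the maximum over $i$ and the supremum over the time variable (using that $\ov{x}$ is nondecreasing) gives $\ov{x}(t)\le\|\phi\|+C_1(T-t_0)+C_2\int_{t_0}^{t}\ov{x}(s)\,ds$ for $t\in[t_0,T)$, so Gr\"onwall's inequality bounds $\ov{x}$ on $[t_0,T)$.

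Finally, once $x$ is bounded on $(-\infty,T)$, $b_i(t,x_i(t))$ remains bounded on $[t_0,T)$ (continuity of $b_i$ together with boundedness of $x_i$), hence $x'$ is bounded there too; thus $x$ is Lipschitz on $[t_0,T)$, $\lim_{t\to T^-}x(t)$ exists, and $\{x_t:t\in[t_0,T)\}$ is a bounded subset of $UC_g$. Applying the extension/continuation results for \eqref{FDE} in $UC_g$ at $t=T$ produces a solution defined strictly beyond $(-\infty,T)$, contradicting the maximality of $T$; hence $T=+\infty$ and, together with the backward part, $x$ is defined on $\er$. I expect the main obstacle to be this first part: turning the possibly superlinear self-inhibition term $b_i$ into a usable bound via the sign/Dini argument while simultaneously keeping the numerous high-order discrete and distributed-delay contributions under control through H5.--H6.; once the \emph{a priori} bound is in hand, the continuation step is routine.
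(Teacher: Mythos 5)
Your proposal is correct and follows essentially the same route as the paper: the same sign/monotonicity trick using H3. to discard the unbounded part of $b_i$, linear bounds on the delayed terms from H5.--H6. together with the unit total variation of $\eta_{ijp},\til{\eta}_{ilp}$, and a Gronwall estimate on the running supremum $\sup_{s\le t}|x(s)|$ (the paper's $\|z_t\|$). The only difference is cosmetic: the paper contradicts the blow-up alternative in the Hale--Kato continuation theorem (a sequence $t_k\to t^*$ with $|x(t_k)|\to+\infty$) directly with the Gronwall bound, whereas you re-derive continuability past $t^*$ from boundedness of $x$ and $x'$.
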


\begin{proof}
Let $x(t)=(x_1(t),\dots,x_n(t))$ be a maximal (noncontinuable) solution of the initial value problem \eqref{1}-\eqref{1-IC}. By the continuation theorem \cite[Theorem 2.3]{hale1978phase}, the solution $x(t)$ is defined on $(-\infty,t^*)$ with $t^*\in(t_0,+\infty]$ and there is an increasing real sequence $(t_k)_{k\in\en}$ such that
\begin{equation}\label{lim t_k}
    \dst\lim\limits_{k\to +\infty}t_k=t^*
\end{equation}
and  $\max\left\{t^*,\lim\limits_{k\to + \infty}|x(t_k)|\right\}=+\infty$. 
To get a contradiction, we assume that $t^*\neq +\infty$. Thus 
\begin{align}\label{eq:proof-sol-def-IR}
\lim\limits_{k\to +\infty}|x(t_k)|=+\infty.
\end{align}

Define $z(t)=(z_1(t),\dots,z_n(t))=(|x_1(t)|,\dots,|x_n(t)|)$. For each  $i=1,\dots,n$, we have
$$
	z_i'(t)=sign(x_i(t))x_i'(t)=sign(x_i(t))a_i(t,x_i(t))\left[-b_i(t,x_i(t))+F_i\left(\mathcal{U}_i(t,x_t),\mathcal{V}_i(t,x_t)\right)+I_i(t)\right],
$$
for almost every $t\geq t_0$. For all $t\geq t_0$, and  integrating over $[t_0,t]$, we obtain
$$
\begin{array}{rcl}
	z_i(t)&\leq&z_i(t_0)+\dst\int_{t_0}^tz'_i(v)dv\\
	&\leq&z_i(t_0)-\dst\int_{t_0}^ta_i(v,x_i(v))sign(x_i(v))b_i(v,x_i(v))dv+\int_{t_0}^ta_i(v,x_i(v))|I_i(v)|dv\\
	& &+\dst\int_{t_0}^ta_i(v,x_i(v)\big|F_i\left(\mathcal{U}_i(v,x_v),\mathcal{V}_i(v,x_v)\right)\big|dv,\\
\end{array}
$$
and by  H2., H3., H5., and H6., we obtain
\begin{eqnarray}\label{eq:proof-sol-def-IR-A}
	z_i(t)&\leq&\|\phi\|-\dst\int_{t_0}^t\underline{a}_i\beta_i(v)z_i(v)dv+\int_{t_0}^t\ov{a}_i\big(|b_i(v,0)|+|I_i(v)|+|F_i(\mathcal{U}_i(v,0),\mathcal{V}_i(v,0))|\big)dv\nonumber\\
	&
	&	
	+\int_{t_0}^t\ov{a}_i|F_i(\mathcal{U}_i(v,x_v),\mathcal{V}_i(v,x_v))-F_i(\mathcal{U}_i(v,0),\mathcal{V}_i(v,0))|dv\nonumber\\
         & \leq &    \|\phi\|-\dst\int_{t_0}^t\underline{a}_i\beta_i(v)z_i(v)dv+\int_{t_0}^t\ov{a}_i\big(|b_i(v,0)|+|I_i(v)|+|F_i(\mathcal{U}_i(v,0),\mathcal{V}_i(v,0))|\big)dv\nonumber\\
         &
         &
         +\dst\int_{t_0}^t\ov a_i\bigg(\zeta_i|\mathcal{U}_i(v,x_v)-\mathcal{U}_i(v,0)|+\varsigma_i|\mathcal{V}_i(v,x_v)-\mathcal{V}_i(v,0)|\bigg)dv\nonumber\\
	&\leq &\|\phi\|-\dst\int_{t_0}^t\underline{a}_i\beta_i(v)z_i(v)dv+\int_{t_0}^t\ov{a}_i\big(|b_i(v,0)|+|I_i(v)|+|F_i(\mathcal{U}_i(v,0),\mathcal{V}_i(v,0))|\big)dv\nonumber\\
	& &+
	\dst\sum_{p=1}^{P}\sum_{j,l=1}^{n}\ov{a}_i\bigg(\int_{t_0}^t\zeta_i|c_{ijlp}(v)|\left|h_{ijlp}\big(x_j(v-\tau_{ijp}(v)),x_l(v-\til{\tau}_{ilp}(v))\big)-h_{ijlp}(0,0)\right|dv\nonumber\\
	& &+
	\dst\int_{t_0}^t\varsigma_i|d_{ijlp}(v)|\bigg|f_{ijlp}\left(\dst\int_{-\infty}^0g_{ijp}(x_j(v+s))d\eta_{ijp}(s),\dst\int_{-\infty}^0{g}_{ilp}(x_l(v+s))d{\til{\eta}}_{ilp}(s)\right)\nonumber\\
	& &- f_{ijlp}\big(g_{ijp}(0),\til{g}_{ilp}(0)\big)\bigg|\bigg)dv\nonumber\\
	&\leq&\|\phi\|+\dst\int_{t_0}^t\ov{a}_i\big(|b_i(v,0)|+|I_i(v)|+|F_i\left(\mathcal{U}_i(v,0),\mathcal{V}_i(v,0)\right)|\big)dv\nonumber\\
	& &+\ov{a}_i\dst\int_{t_0}^t\sum_{p=1}^{P}\sum_{j,l=1}^{n}\left(\zeta_i|c_{ijlp}(v)|\big(\gamma^{(1)}_{ijlp}+\gamma^{(2)}_{ijlp}\big)\right.\nonumber\\
	& &\left.+\varsigma_i|d_{ijlp}(v)|\big(\mu_{ijlp}^{(1)}\xi_{ijp}+\mu_{ijlp}^{(2)}{\til{\xi}}_{ilp}\big)\right)\|z_v\|dv.
\end{eqnarray}
Defining the continuous functions $\mathcal{A},\mathcal{B}:[0,+\infty)\to(0,+\infty)$  by 
$$ 
\mathcal{A}(t)=\dst\max_{i}\bigg\{\ov a_i\big(|b_i(t,0)|+|I_i(t)|+|F_i\left(\mathcal{U}_i(t,0),\mathcal{V}_i(t,0)\right)|\big)\bigg\}
$$ 
and 
$$
\mathcal{B}(t)=\dst\max_{i}\left\{\ov{a}_i\sum_{p=1}^P\sum_{j,l=1}^{n}\left(\zeta_i|c_{ijlp}(t)|\big(\gamma^{(1)}_{ijlp}+\gamma^{(2)}_{ijlp}\big)+\varsigma_i|d_{ijlp}(t)|\big(\mu_{ijlp}^{(1)}\xi_{ijp}+\mu_{ijlp}^{(2)}\til{{\xi}}_{ilp}\big)\right)\right\},
$$
respectively, from \eqref{eq:proof-sol-def-IR-A}, we obtain
$$
z_i(t)\leq\|\phi\|+\int_{t_0}^t\big(\mathcal{A}(v)+\mathcal{B}(v)\|z_v\|\big)dv,\Es i=1,\dots,n,\,t\geq t_0,
$$
thus
$$
\|z_t\|\leq\|\phi\|+\int_{t_0}^t\mathcal{A}(v)dv+\int_{t_0}^t\mathcal{B}(v)\|z_v\|dv,\Es t\geq t_0.
$$

Now, by using the generalized Gronwall's inequality \cite{hale1980ordinary}, we obtain
$$
\|z_t\|\leq \|\phi\|+\int_{t_0}^t\mathcal{A}(v)dv+\int_{t_0}^{t}\mathcal{B}(v)\left(\|\phi\|+\int_{t_0}^v\mathcal{A}(r)dr\right)\e^{\int_{v}^{t}\mathcal{B}(r)dr}dv,
$$
and from \eqref{lim t_k},
$\lim\limits_{k} |x(t_k)|=\lim\limits_{k} z(t_k)\leq \lim\limits_{k}\|z_{t_k}\|<+\infty$ which contradicts \eqref{eq:proof-sol-def-IR}.
\end{proof}

Now, we prove the boundedness of solutions of the initial value problem \eqref{1}-\eqref{1-IC}.
\begin{lema}\label{teo:bounded-solutions}
     Assume H1.-H3. and H5.-H7.. Then, solutions of the initial value problem \eqref{1}-\eqref{1-IC} are bounded.
\end{lema}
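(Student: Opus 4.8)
The plan is to combine a change of variables that absorbs the non-autonomous amplification functions with a Razumikhin-type argument in a weighted maximum norm. For each $i=1,\dots,n$ I would set $W_i(t)=\int_0^{x_i(t)}\frac{ds}{a_i(t,s)}$; by H2. this is a $C^1$ function (Leibniz rule), has the same sign as $x_i(t)$, and satisfies $\un{a}_i|W_i(t)|\le|x_i(t)|\le\ov{a}_i|W_i(t)|$. Since $x_i'(t)/a_i(t,x_i(t))=-b_i(t,x_i(t))+F_i(\mathcal U_i(t,x_t),\mathcal V_i(t,x_t))+I_i(t)$ and, by H2., $\frac{\partial}{\partial t}\big(1/a_i(t,s)\big)=-\,\partial_t a_i(t,s)/a_i^2(t,s)\le-A_i(t)$, differentiating $|W_i(t)|$, estimating $b_i(t,x_i(t))$ against the sign of $x_i(t)$ by H3., and estimating $\mathcal U_i,\mathcal V_i,F_i$ exactly as in the proof of Lemma \ref{lemma_1} (now keeping the full term $-(\beta_i(t)+A_i(t))|x_i(t)|$) gives, wherever $x_i(t)\neq0$, a differential inequality of the form
\[
\frac{d}{dt}|W_i(t)|\ \le\ -\big(\beta_i(t)+A_i(t)\big)|x_i(t)|\ +\ K_0\ +\ \zeta_i\,|\mathcal U_i(t,x_t)-\mathcal U_i(t,0)|\ +\ \varsigma_i\,|\mathcal V_i(t,x_t)-\mathcal V_i(t,0)|,
\]
where $K_0:=\sup_{t\ge0}\max_i\big(|b_i(t,0)|+|I_i(t)|+|F_i(\mathcal U_i(t,0),\mathcal V_i(t,0))|\big)<\infty$ by H1., and the last two terms are bounded through H5.--H6. by the delayed values $|x_j(t-\tau_{ijp}(t))|$, $|x_l(t-\til{\tau}_{ilp}(t))|$ and the integrals $\int_{-\infty}^0|x_j(t+s)|\,d\eta_{ijp}(s)$, $\int_{-\infty}^0|x_l(t+s)|\,d\til{\eta}_{ilp}(s)$, with exactly the coefficients occurring in H7..

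Next I would fix, using H7., constants $T\ge t_0$ and $\delta>0$ such that for every $t\ge T$ and every $i$ the expression under the $\limsup$ in H7. is $\le-\delta$ and, since the sum in it is nonnegative, also $\beta_i(t)+A_i(t)\ge\delta/\un{a}_i>0$. Put $u(t)=\max_i|W_i(t)|/d_i$, continuous on $\er$ by Lemma \ref{lemma_1}, let $\un{d}=\min_i d_i$, and set $M=\max\{\sup_{s\le T}u(s),\,K_0/(\delta\un{d})\}+1$, which is finite because $x$ is bounded on $(-\infty,T]$. Suppose, for contradiction, that $u(t)>M$ for some $t>T$, and let $t_1=\inf\{t>T:u(t)>M\}$; by continuity $u(t_1)=M$ and $u(s)\le M$ for all $s\le t_1$. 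Choosing an index $i_0$ and a sequence $s_k\downarrow t_1$ with $u(s_k)>M$ and $u(s_k)=|W_{i_0}(s_k)|/d_{i_0}$ (possible as there are finitely many indices), one gets $|W_{i_0}(t_1)|/d_{i_0}=u(t_1)=M>0$, so $|W_{i_0}|$ attains a maximum over $(-\infty,t_1]$ at $t_1$ and hence $\frac{d}{dt}|W_{i_0}(t_1)|\ge0$. Evaluating the differential inequality at $(t_1,i_0)$ and using $|x_{i_0}(t_1)|\ge\un{a}_{i_0}d_{i_0}u(t_1)$ (legitimate since $\beta_{i_0}(t_1)+A_{i_0}(t_1)>0$), $|x_j(t_1-\tau_{i_0jp}(t_1))|\le\ov{a}_jd_ju(t_1)$, $|x_l(t_1-\til{\tau}_{i_0lp}(t_1))|\le\ov{a}_ld_lu(t_1)$, and (since $\eta_{i_0jp},\til{\eta}_{i_0lp}$ have unit total mass) $\int_{-\infty}^0|x_j(t_1+s)|\,d\eta_{i_0jp}(s)\le\ov{a}_jd_ju(t_1)$ and likewise for the $\til{\eta}$-integral --- all legitimate because every time argument involved is $\le t_1$, where $u\le u(t_1)$ --- then dividing by $d_{i_0}$ and invoking the bound of H7. yields $0\le-\delta\,u(t_1)+K_0/\un{d}$, i.e. $M=u(t_1)\le K_0/(\delta\un{d})<M$, a contradiction. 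Therefore $u\le M$ on $\er$, so $|x_i(t)|\le\ov{a}_id_iM$ for $t\ge t_0$ and $|x_i(t)|\le\|\phi\|$ for $t\le t_0$; thus $x$ is bounded.

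The main obstacle is the non-autonomous amplification factor, which is precisely why the passage to $W_i$ is needed: H2. must be converted into the extra damping $-A_i(t)|x_i(t)|$ so that the effective dissipation rate becomes $\beta_i(t)+A_i(t)$, matching the quantity appearing in H7.. The delicate point is then at the critical time $t_1$, where one has to use the \emph{lower} half of $\un{a}_i|W_i|\le|x_i|\le\ov{a}_i|W_i|$ for the self-regulating term --- which is licit only once the sign of $\beta_i+A_i$ has been fixed through H7., so that $t_1\ge T$ is essential --- and the \emph{upper} half for all delayed and distributed-delay contributions, so that after dividing by $d_{i_0}$ the estimate of H7. applies verbatim. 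The remaining ingredients are routine: differentiability of $W_i$ and finiteness of $K_0$ (both from H1.--H2.), and the treatment of the infinite memory, which, as in Lemma \ref{lemma_1}, enters only via $\int_{-\infty}^0|x_j(t_1+s)|\,d\eta(s)\le\sup_{s\le t_1}|x_j(s)|$.
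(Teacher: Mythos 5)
Your argument is correct and is essentially the paper's own proof: you use the same auxiliary function (the paper's $z_i(t)=\mathrm{sign}(x_i(t))\,d_i^{-1}\int_0^{x_i(t)}a_i^*(t,v)^{-1}dv$ is exactly your $|W_i|/d_i$), derive the same differential inequality from H2., H3., H5., H6., and invoke H7. in the same Razumikhin-type fashion, the only difference being that you argue at a first crossing of an explicit level $M$ rather than along a sequence $t_k\nearrow+\infty$ extracted from an assumed unbounded solution. The one detail to patch is that $a_i(t,\cdot)$ is defined only for $t\ge 0$, so for $W_i$ (hence $u$) to make sense on all of $\er$ — which your bounds on the distributed-delay integrals and on $\sup_{s\le T}u(s)$ require — you must freeze the amplification in the past, i.e. replace $a_i(t,v)$ by $a_i(t_0,v)$ for $t<t_0$, exactly as the paper does with $a_i^*$.
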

\begin{proof}
	 Let $t_0\geq0$ and $\phi\in BC$ and consider $x(t)=(x_1(t),\dots,x_n(t))$ a solution of the initial value problem \eqref{1}-\eqref{1-IC}.
	
     By H1., the functions $b_i(\cdot,0)$, $c_{ijlp}$, $d_{ijlp}$, and $I_i$ are bounded, thus there exists $M>0$ such that
     \begin{align}\label{1.1}
     	M\geq d_i^{-1}\bigg(|F_i\left(\mathcal{U}_i(t,0),\mathcal{V}_i(t,0)\right)|+|b_i(t,0)|+|I_i(t)|\bigg),\Es t\geq 0,\, i=1,\dots,n.
     \end{align}
     
     From H7., there exist $T>t_0$ and a negative number $B$ such that 
    \begin{eqnarray}\label{proof-bounded-sol-1}
    -\un{a}_i\big({\beta}_i(t)+A_i(t)\big)+\lefteqn{\sum_{p=1}^{P}\sum_{j,l=1}^{n}\Bigg(\zeta_i|{c}_{ijlp}(t)|\bigg(\ov{a}_j\frac{d_j}{d_i}\gamma_{ijlp}^{(1)}+\ov{a}_l\frac{d_l}{d_i}\gamma_{ijlp}^{(2)}\bigg)}\nonumber
		\\
		+&\dst\varsigma_i|{d}_{ijlp}(t)|\bigg(\ov{a}_j\dst\frac{d_j}{d_i}\xi_{ijp}\mu_{ijlp}^{(1)}+\ov{a}_l\frac{d_l}{d_i}\til{\xi}_{ilp}\mu_{ijlp}^{(2)}\bigg)\Bigg)<B,\Es
    \end{eqnarray}
     for $t\geq T$ and $i=1,\ldots,n$.
     
    Now, define $z(t)=(z_1(t),\ldots,z_n(t))$ by
     \begin{eqnarray}\label{def-z}
       z_i(t)=sign(x_i(t))d_i^{-1}\int_0^{x_i(t)}\frac{1}{a^*_i(t,v)}dv,\Es t\in\er,\, i=1,\ldots,n,
     \end{eqnarray}
     where, for all $v\in\er$,
     $$
       a_i^*(t,v)=\left\{\begin{array}{ll}
       	a_i(t,v),&\text{ if }t\geq t_0\\
       	a_i(t_0,v),&\text{ if }t< t_0
       	\end{array}\right..
     $$ 
     From \eqref{1} and \eqref{def-U}, we obtain, for $t> t_0$,
     \begin{eqnarray}
     	z'_i(t)&=&sign(x_i(t))d_i^{-1}\left(\frac{x_i'(t)}{a_i(t,x_i(t))}+\int_0^{x_i(t)}-\frac{\frac{\partial}{\partial t} a_i(t,v)}{a_i(t,v)^2}dv\right)\nonumber\\
     	&=&sign(x_i(t))d_i^{-1}\left(-b_i(t,x_i(t))+F_i\left(\mathcal{U}_i(t,x_t),\mathcal{V}_i(t,x_t)\right)+I_i(t)+\int_0^{x_i(t)}-\frac{\frac{\partial}{\partial t} a_i(t,v)}{a_i(t,v)^2}dv\right)\nonumber\\
     	&\leq&d_i^{-1}\left(-sign(x_i(t))(b_i(t,x_i(t))-b_i(t,0))-sign(x_i(t))\int_0^{x_i(t)}\frac{\frac{\partial}{\partial t} a_i(t,v)}{a_i(t,v)^2}dv\right.\nonumber\\
     	& &+|F_i(\mathcal{U}_i(t,x_t),\mathcal{V}_i(t,x_t))-F_i(\mathcal{U}_i(t,0),\mathcal{V}_i(t,0))|+|b_i(t,0)|+|F_i(\mathcal{U}_i(t,0),\mathcal{V}_i(t,0))|+|I_i(t)|\bigg).\nonumber
     \end{eqnarray}
     By H2., H3., H5., H6., and \eqref{def-U}, for all $t> t_0$ and $i=1,\ldots,n$, we obtain
     \begin{eqnarray}\label{proof-bounded-solutions1}
     	z'_i(t)
     	&\leq&d_i^{-1}\bigg(-(\beta_i(t)+A_i(t))|x_i(t)|+|F_i(\mathcal{U}_i(t,0),\mathcal{V}_i(t,0))|+|b_i(t,0)|+|I_i(t)|\nonumber\\
     	& & +\zeta_i|\mathcal{U}_i(t,x_t)-\mathcal{U}_i(t,0)|+\varsigma_i|\mathcal{V}_i(t,x_t)-\mathcal{V}_i(t,0)|\bigg)\nonumber\\
     	&\leq&-(\beta_i(t)+A_i(t))d_i^{-1}|x_i(t)|+d_i^{-1}\bigg(|F_i(\mathcal{U}_i(t,0),\mathcal{V}_i(t,0))|+|b_i(t,0)|+|I_i(t)|\bigg)\nonumber\\
     	& &+d_i^{-1}\sum_{p=1}^{P}\sum_{j,l=1}^{n}\bigg(\zeta_i|c_{ijlp}(t)|\big|h_{ijlp}(x_j(t-\tau_{ijp}(t)),x_l(t-\til{\tau}_{ilp}(t)))-h_{ijlp}(0,0)\big|\nonumber\\
     	& &+\varsigma_i|d_{ijlp}(t)|\bigg|f_{ijlp}\left(\int_{-\infty}^0g_{ijp}(x_j(t+s))d\eta_{ijp}(s),\int_{-\infty}^0\til{g}_{ilp}(x_l(t+s))d\til{\eta}_{ilp}(s)\right)-f_{ijlp}(0,0)\bigg|\bigg)\nonumber\\
     	&\leq&-(\beta_i(t)+A_i(t))d_i^{-1}|x_i(t)|+d_i^{-1}\bigg(|F_i(\mathcal{U}_i(t,0),\mathcal{V}_i(t,0))|+|b_i(t,0)|+|I_i(t)|\bigg)\nonumber\\
     	& &+d_i^{-1}\sum_{p=1}^{P}\sum_{j,l=1}^{n}\bigg(\zeta_i|c_{ijlp}(t)|\left(\gamma_{ijlp}^{(1)}|x_j(t-\tau_{ijp}(t))|+\gamma_{ijlp}^{(2)}|x_l(t-\til{\tau}_{ilp}(t))|\right)\nonumber\\
     	& &+\varsigma_i|d_{ijlp}(t)|\left(\mu_{ijlp}^{(1)}\xi_{ijp}\int_{-\infty}^0|x_j(t+s)|d\eta_{ijp}(s)+\mu_{ijlp}^{(2)}
    \til{\xi}_{ilp}\int_{-\infty}^0|x_l(t+s)|d\til{\eta}_{ilp}(s)\right)\bigg).\nonumber\\
     \end{eqnarray}
     
     Now, by contradiction, we assume that $x(t)$ is unbounded. From Lemma \ref{lemma_1}, $x(t)$ is defined on $\mathbb{R}$ and  from H2. and \eqref{def-z}, we conclude that there exist $i\in \{1,\dots,n\}$ and a positive real sequence $(t_k)_{k\in\mathbb{N}}$ such that $T\leq t_k  \nearrow +\infty$, $0<z_i(t_k)\nearrow+\infty$,
     \begin{align}\label{1.3}
     z_i(t_k)=\|z_{t_k}\|\geq \|z_t\|, \text{  and  } z'_i(t_k)\geq0,\Es k\in \mathbb{N},\, t\leq t_k.
     \end{align}
     From H2. and \eqref{def-z}, we also have 
     $$
       (d_j\ov{a}_j)^{-1}|x_j(t)|\leq z_j(t)\leq(d_j\un{a}_j)^{-1}|x_j(t)|,\Es t\in\er,\,j=1,\ldots,n,
     $$
     and from \eqref{1.1} and \eqref{proof-bounded-solutions1}, we obtain
     \begin{eqnarray}
       z'_i(t_k)&\leq& -(\beta_i(t_k)+A_i(t_k))\un{a}_iz_i(t_k)+\sum_{p=1}^{P}\sum_{j,l=1}^{n}\bigg(\zeta_i|c_{ijlp}(t_k)|\left(\ov{a}_j\frac{d_j}{d_i}\gamma_{ijlp}^{(1)}+\ov{a}_l\frac{d_l}{d_i}\gamma_{ijlp}^{(2)}\right)\|z_{t_k}\|\nonumber\\
       & &+\varsigma_i|d_{ijlp}(t_k)|\left(\ov{a}_j\dst\frac{d_j}{d_i}\xi_{ijp}\mu_{ijlp}^{(1)}+\ov{a}_l\frac{d_l}{d_i}\til{\xi}_{ilp}\mu_{ijlp}^{(2)}\right)\|z_k\|\bigg)+M,\Es k\in \mathbb{N},\nonumber
     \end{eqnarray}
     and consequently, from \eqref{1.3},
      \begin{eqnarray}
     	z'_i(t_k)&\leq& \left[-(\beta_i(t_k)+A_i(t_k))\un{a}_i+\sum_{p=1}^{P}\sum_{j,l=1}^{n}\bigg(\zeta_i|c_{ijlp}(t_k)|\left(\ov{a}_j\frac{d_j}{d_i}\gamma_{ijlp}^{(1)}+\ov{a}_l\frac{d_l}{d_i}\gamma_{ijlp}^{(2)}\right)\right.\nonumber\\
     	& &+\varsigma_i\left.|d_{ijlp}(t_k)|\left(\ov{a}_j\dst\frac{d_j}{d_i}\xi_{ijp}\mu_{ijlp}^{(1)}+\ov{a}_l\frac{d_l}{d_i}\til{\xi}_{ilp}\mu_{ijlp}^{(2)}\right)\bigg)\right]z_i(t_k)+M,\Es k\in \mathbb{N}.\nonumber
     \end{eqnarray}
     Hence, by \eqref{proof-bounded-sol-1},
     $$
       z'_i(t_k)\leq Bz_i(t_k)+M\overset{k\to +\infty}{\longrightarrow} -\infty,
     $$
     which contradicts \eqref{1.3}.
     \end{proof}
     
     Now we are in position to prove our main result. It presents sufficient conditions ensuring the global attractivity of solutions of system \eqref{1} and of its asymptotic systems \eqref{def-asymp-system}.

\begin{teorema}\label{convegrence of solutions theorem}
Assume H1.-H7. hold.
Then 
 $$
  \lim\limits_{t \to +\infty}|x(t)-\hat{x}(t)|=0,
 $$
for all $x(t)$ and $\hat{x}(t)$  solutions of systems \eqref{1} and \eqref{def-asymp-system} respectively, with bounded initial conditions.
\end{teorema}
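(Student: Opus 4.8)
The plan is to follow the pattern of Lemmas~\ref{lemma_1} and~\ref{teo:bounded-solutions}: pass to a suitably normalized difference of the two solutions, derive for each coordinate a scalar differential inequality in which the instantaneous term dominates, and then let $t\to+\infty$ using the diagonal dominance H7.. The first observation is that the asymptotic system \eqref{def-asymp-system} itself satisfies H1.--H7.: H2., H5. and H6. are shared with \eqref{1}, H3. for $\hat b_i$ is \eqref{hyp-H3^}, H7. in the equivalent hatted form is the inequality displayed right before Section~3, and H1. for $\hat c_{ijlp},\hat d_{ijlp},\hat I_i,\hat b_i(\cdot,u)$ follows from H1. for \eqref{1} together with \eqref{asymptotic-system} (applied with constant $w$), since a bounded function stays bounded under a perturbation vanishing at infinity. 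Hence Lemmas~\ref{lemma_1} and~\ref{teo:bounded-solutions} apply to both systems, so $x$ and $\hat x$ are defined on $\er$ and bounded, say $|x_i(t)|,|\hat x_i(t)|\le K$. One also needs $\hat x$ to be uniformly continuous on some $[T_0,+\infty)$; this holds because $\hat x_i'$ is bounded there: $a_i$ is bounded (H2.), $\hat I_i$ and $F_i(\hat{\mathcal U}_i(t,\hat x_t),\hat{\mathcal V}_i(t,\hat x_t))$ are bounded on $[T_0,+\infty)$, and $|\hat b_i(t,\hat x_i(t))|\le\max\{|\hat b_i(t,K)|,|\hat b_i(t,-K)|\}$ by the monotonicity \eqref{hyp-H3^}, the right-hand side being bounded by H1. and \eqref{asymptotic-system} with $w\equiv\pm K$ (note \eqref{hyp-H3^} supplies no \emph{upper} Lipschitz bound on $\hat b_i$, so this monotonicity detour is necessary).

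For $t$ beyond the two initial times set $z_i(t)=\mathrm{sign}(x_i(t)-\hat x_i(t))\,d_i^{-1}\int_{\hat x_i(t)}^{x_i(t)}\frac{dv}{a_i(t,v)}$ (extending $a_i$ to small $t$ as in \eqref{def-z} if needed), so that $z_i\ge0$ is locally Lipschitz and, by H2., $(d_i\ov{a}_i)^{-1}|x_i(t)-\hat x_i(t)|\le z_i(t)\le(d_i\un{a}_i)^{-1}|x_i(t)-\hat x_i(t)|$; in particular $z_i$ is bounded on $\er$ and it suffices to show $\max_i z_i(t)\to0$. Differentiating a.e., inserting $x_i'/a_i=-b_i+F_i+I_i$ and its analogue for $\hat x$, writing $b_i(t,x_i(t))-\hat b_i(t,\hat x_i(t))=(b_i(t,x_i(t))-b_i(t,\hat x_i(t)))+(b_i(t,\hat x_i(t))-\hat b_i(t,\hat x_i(t)))$, and then using H3. on the first bracket, H2. on the $\frac{\partial}{\partial t}a_i$-integral exactly as in \eqref{proof-bounded-solutions1}, H6. on the $F_i$-difference and H5.--H6. on $|\mathcal U_i(t,x_t)-\hat{\mathcal U}_i(t,\hat x_t)|$ and $|\mathcal V_i(t,x_t)-\hat{\mathcal V}_i(t,\hat x_t)|$, one obtains, for $t$ large enough that $\beta_i(t)+A_i(t)>0$ (true by H7.),
$$z_i'(t)\le -P_i(t)\,z_i(t)+\sum_{p=1}^{P}\sum_{j,l=1}^{n}\Bigg[\zeta_i|c_{ijlp}(t)|\Big(\frac{d_j\ov{a}_j}{d_i}\gamma^{(1)}_{ijlp}z_j(t-\tau_{ijp}(t))+\frac{d_l\ov{a}_l}{d_i}\gamma^{(2)}_{ijlp}z_l(t-\til{\tau}_{ilp}(t))\Big)+\varsigma_i|d_{ijlp}(t)|\Big(\frac{d_j\ov{a}_j}{d_i}\xi_{ijp}\mu^{(1)}_{ijlp}\!\int_{-\infty}^{0}\!z_j(t+s)\,d\eta_{ijp}(s)+\frac{d_l\ov{a}_l}{d_i}\til{\xi}_{ilp}\mu^{(2)}_{ijlp}\!\int_{-\infty}^{0}\!z_l(t+s)\,d\til{\eta}_{ilp}(s)\Big)\Bigg]+\varepsilon_i(t),$$
where $P_i(t)=\un{a}_i(\beta_i(t)+A_i(t))$ and $\varepsilon_i(t)\ge0$ gathers $|b_i(t,\hat x_i(t))-\hat b_i(t,\hat x_i(t))|$, $|I_i(t)-\hat I_i(t)|$, $|c_{ijlp}(t)-\hat c_{ijlp}(t)|$ and $|d_{ijlp}(t)-\hat d_{ijlp}(t)|$ times bounded quantities (values of $h_{ijlp},f_{ijlp}$ along $\hat x$), plus $|c_{ijlp}(t)|\gamma^{(k)}_{ijlp}|\hat x_j(t-\tau_{ijp}(t))-\hat x_j(t-\hat\tau_{ijp}(t))|$ and the analogous term for $\til{\tau}_{ilp}$; by \eqref{asymptotic-system}, H4., the uniform continuity of $\hat x$ just established, and H1., each summand tends to $0$, so $\varepsilon_i(t)\to0$.

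Denote by $C_i(t)$ the sum over $p,j,l$ appearing in H7.; by H1. it is bounded, $C_i(t)\le C^{*}$, and by H7. there are $T$ and $B<0$ with $C_i(t)-P_i(t)<B$ for $t\ge T$, so $P_i(t)>C_i(t)+|B|\ge|B|$ and, since $s\mapsto s/(s+|B|)$ is increasing, $C_i(t)\le\gamma P_i(t)$ with $\gamma:=C^{*}/(C^{*}+|B|)<1$. Put $u=\limsup_{t\to+\infty}\max_i z_i(t)\in[0,+\infty)$. Because $t-\tau_{ijp}(t),t-\til{\tau}_{ilp}(t)\to+\infty$ (H4.) and $\eta_{ijp},\til{\eta}_{ilp}$ are nonnegative of total mass $1$, a routine fading-memory estimate (split $\int_{-\infty}^{0}$ at $-R$ for $R$ large) shows that for each $\varepsilon>0$ there is $T_\varepsilon\ge T$ such that for $t\ge T_\varepsilon$ every delayed or distributed $z$-term on the right-hand side is $<u+\varepsilon$ and $\varepsilon_i(t)<\varepsilon$; the displayed inequality then becomes $z_i'(t)\le -P_i(t)\big(z_i(t)-\gamma(u+\varepsilon)\big)+\varepsilon$ for $t\ge T_\varepsilon$. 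Setting $w(t):=z_i(t)-\gamma(u+\varepsilon)-\varepsilon/|B|$ and using $P_i(t)\ge|B|$ gives $w'(t)\le -P_i(t)w(t)$, hence $w(t)\le w(T_\varepsilon)e^{-\int_{T_\varepsilon}^{t}P_i}$ with $\int_{T_\varepsilon}^{t}P_i\ge|B|(t-T_\varepsilon)\to+\infty$, so $\limsup_{t\to+\infty}z_i(t)\le\gamma(u+\varepsilon)+\varepsilon/|B|$ for each $i$. Taking the maximum over $i$ and letting $\varepsilon\to0^{+}$ yields $u\le\gamma u$, hence $u=0$; then $z_i(t)\to0$ and $|x_i(t)-\hat x_i(t)|\le d_i\ov{a}_i z_i(t)\to0$, i.e. $|x(t)-\hat x(t)|\to0$. (Taking \eqref{def-asymp-system} equal to \eqref{1}, or to itself, the same computation also shows that all solutions of either system converge to one another.)

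I expect two points to carry the real weight. The first is showing $\varepsilon_i(t)\to0$, and within it the delay-mismatch term $|\hat x_j(t-\tau_{ijp}(t))-\hat x_j(t-\hat\tau_{ijp}(t))|$, which forces one to establish the uniform continuity of $\hat x$ beforehand — the only non-routine ingredient there being the monotonicity bound on $\hat b_i(t,\hat x_i(t))$, since \eqref{hyp-H3^} provides no upper Lipschitz constant. The second is extracting from H7. a \emph{constant} contraction factor $\gamma<1$ in the last step: this is exactly where the boundedness of $c_{ijlp},d_{ijlp}$ in H1. enters essentially, for if $C_i$ were allowed to be unbounded (together with an unbounded $\beta_i$) the ratio $C_i(t)/P_i(t)$ could approach $1$ and the final $\limsup$ argument would break down.
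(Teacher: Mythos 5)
Your argument is correct, and up to the final step it is the same proof as the paper's: you introduce exactly the weighted normalized difference $y_i(t)=\mathrm{sign}(x_i(t)-\hat x_i(t))d_i^{-1}\int_{\hat x_i(t)}^{x_i(t)}a_i(t,v)^{-1}dv$ of \eqref{def-y(t)}, derive the same delay differential inequality (the paper's \eqref{2.3}), show the non-autonomous perturbation vanishes (your $\varepsilon_i$ is the paper's $\delta_i$ in \eqref{24}), and use the same fading-memory splitting of the distributed term as in \eqref{lim-integral}. Two things differ. First, your splitting of the delay-mismatch term is the mirror image of the paper's: you charge $|\hat x_j(t-\tau_{ijp}(t))-\hat x_j(t-\hat\tau_{ijp}(t))|$ to the asymptotic solution and therefore prove uniform continuity of $\hat x$ (via the monotonicity bound from \eqref{hyp-H3^} plus H1 and \eqref{asymptotic-system}), whereas the paper charges the mismatch to $x$ and proves uniform continuity of $x$ directly from H1 and H3; both are legitimate, and your version forces you to make explicit something the paper leaves implicit, namely that the hatted system inherits H1--H7 so that Lemmas \ref{lemma_1} and \ref{teo:bounded-solutions} apply to it as well. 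Second, and this is the genuine divergence, the paper concludes with the fluctuation lemma: it picks the index attaining $u=\max_i\limsup y_i$, extracts a sequence $t_k$ with $y_i(t_k)\to u$ and $y_i'(t_k)\to 0$, and reads off the contradiction $0\le(\text{negative})\,u$ directly from H7 along that sequence. You instead integrate the inequality with the factor $\mathrm{e}^{\int P_i}$ after extracting a uniform contraction constant $\gamma=C^*/(C^*+|B|)<1$, obtaining $u\le\gamma u$ without any argument by contradiction. Your route needs the boundedness of the coefficient sum $C_i(t)$ (available from H1, as you note) to turn the strict sign condition in H7 into a quantitative ratio bound, which the paper's fluctuation-lemma argument never requires; in exchange you avoid invoking the fluctuation lemma and get a slightly more quantitative, self-contained comparison argument. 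Minor bookkeeping (the distributed term is really bounded by $u+2\varepsilon$ after the tail split, and the a.e.\ differentiability of $z_i$ at points where $x_i=\hat x_i$) is at the same level of informality as the paper's own treatment and does not constitute a gap.
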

\begin{proof}
Let $x(t)=(x_1(t),\dots,x_n(t))$ and $\hat{x}(t)=(\hat{x}_1(t),\dots,\hat{x}_n(t))$ be solutions of systems (\ref{1}) and (\ref{def-asymp-system}) respectively, with bounded initial conditions, and define $ y(t)=(y_1(t),\ldots,y_n(t))$ with
\begin{eqnarray}\label{def-y(t)}
  y_i(t)=sign(x_i(t)-\hat{x}_i(t))d_i^{-1}\int_{\hat{x}_i(t)}^{x_i(t)}\frac{1}{a_i(t,v)}dv.
\end{eqnarray}
From Lemma \ref{lemma_1} and Lemma \ref{teo:bounded-solutions}, we know that $x(t)$ and $\hat{x}(t)$ are bounded on $\mathbb{R}$. It follows that $y(t)$ is a nonnegative bounded function on $\mathbb{R}$ and it is possible to define the limits
$$
  u_i= \limsup_{t\to+\infty} y_i(t),\Es i=1,\dots,n.
$$
Taking $u=\dst\max_{i}\{u_i\}\in[0,+\infty)$, the result is proved if we show that $u=0$.

Let $i\in\{1,\dots,n\}$ be such that $u_i=u$. As $y_i$ is a differentiable real function, by the fluctuation lemma \cite[Lemma A.1]{smith2011introduction}, there exists a positive real sequence $(t_k)_{k\in\mathbb{N}}$ such that
\begin{align}\label{2.1}
t_k \nearrow +\infty, \text{  } y_i(t_k)\to u \text{  and  } y'_{i}(t_k)\rightarrow 0 \text{ as } k\to+\infty.
\end{align}
Assume contrarily that $u>0$.
For $\varepsilon\in(0,u)$, there is $T=T(\varepsilon)>0$ such that $|y(t)|<u+\varepsilon$, for $t\geq T$, and
\begin{eqnarray}\label{lim-integral} \max\left\{\int_{-\infty}^{-T}d\eta_{ijp}(s),\int_{-\infty}^{-T}d\til{\eta}_{ilp}(s)\right\}<\frac{\varepsilon}{\mathcal{Y}},\Es i,j,l =1,\ldots,n,\,p= 1,\ldots,P, 
\end{eqnarray}
where $\mathcal{Y}=\dst\sup_{t\in\er}|y(t)|$.

Since $\dst\lim_{t\to+\infty}(t-\tau_{ijp}(t))=\lim_{t\to+\infty}(t-\til{\tau}_{ilp}(t))=+\infty$, $\dst\lim_{t\to+\infty}(\tau_{ijp}(t)-\hat{\tau}_{ijp}(t))=\lim_{t\to+\infty}(\til\tau_{ilp}(t)-\hat{\til\tau}_{ilp}(t))=0$, $t_k \nearrow +\infty$, and $\dst\lim_ky_{i}(t_k)=u$, then there exists a natural number $k_0$ such that
\begin{equation}\label{N}
    t_k>2T,\Es t_k-\hat{\tau}_{ijp}(t_k)>T,\Es t_k-\hat{\til\tau}_{ilp}(t_k)>T \text{ and } y_i(t_k)>u-\varepsilon>0,
\end{equation}
  
for all $k\geq k_0$, $i,j,l=1,\ldots,n$, and $p=1,\ldots,P$. Consequently, for $k\geq k_0$, we have
\begin{eqnarray}\label{lim-atraso-discreto}
	|y( t_k-\hat{\tau}_{ijp}(t_k))|<u+\varepsilon\,\text{ and }\, |y( t_k-\hat{\til\tau}_{ilp}(t_k))|<u+\varepsilon.
\end{eqnarray}

For $k\geq k_0$, from systems \eqref{1}, \eqref{def-asymp-system} and, notations \eqref{def-U}, \eqref{def-hat-U},  we have  
\begin{eqnarray}
y'_i(t_k)&=&sign(x_i(t_k)-\hat{x}_i(t_k))d^{-1}_i\left[\frac{x'_i(t_k)}{a_i(t_k,x_i(t_k))}-\frac{\hat{x}'_i(t_k)}{a_i(t_k,\hat{x}_i(t_k))}+\int_{\hat{x}_i(t_k)}^{x_i(t_k)}-\frac{\frac{\partial}{\partial t}a_i(t_k,v)}{a_i(t_k,v)^2}dv\right]\nonumber\\
&=&sign(x_i(t_k)-\hat{x}_i(t_k))d^{-1}_i\bigg[-b_i(t_k,x_i(t_k))+\hat{b}_i(t_k,\hat{x}_i(t_k))+\int_{\hat{x}_i(t_k)}^{x_i(t_k)}-\frac{\frac{\partial}{\partial t}a_i(t_k,v)}{a_i(t_k,v)^2}dv\nonumber\\
& &+F_i(\mathcal{U}_i(t_k,x_{t_k}),\mathcal{V}_i(t_k,x_{t_k}))-F_i(\hat{\mathcal{U}}_i(t_k,\hat{x}_{t_k}),\hat{\mathcal{V}}_i(t_k,\hat{x}_{t_k}))+I_i(t_k)-\hat{I}_i(t_k)\bigg]\nonumber\\
&\leq&sign(x_i(t_k)-\hat{x}_i(t_k))d^{-1}_i\left[-b_i(t_k,x_i(t_k))+\hat{b}_i(t_k,\hat{x}_i(t_k))+\int_{\hat{x}_i(t_k)}^{x_i(t_k)}-\frac{\frac{\partial}{\partial t}a_i(t_k,v)}{a_i(t_k,v)^2}dv\right]\nonumber\\
& &+d_i^{-1}|F_i(\mathcal{U}_i(t_k,x_{t_k}),\mathcal{V}_i(t_k,x_{t_k}))-F_i(\hat{\mathcal{U}}_i(t_k,\hat{x}_{t_k}),\hat{\mathcal{V}}_i(t_k,\hat{x}_{t_k})|+d_i^{-1}|I_i(t_k)-\hat{I}_i(t_k)|.\nonumber
\end{eqnarray}
From H2., H3., and H6., we obtain
\begin{eqnarray}
	y_i'(t_k)&\leq& -sign(x_i(t_k)-\hat{x}_i(t_k))d^{-1}_i\left[\big(b_i(t_k,x_i(t_k))-b_i(t_k,\hat{x}_i(t_k))\big)+\int_{\hat{x}_i(t_k)}^{x_i(t_k)}\frac{\frac{\partial}{\partial t}a_i(t_k,v)}{a_i(t_k,v)^2}dv\right]\nonumber\\
	& &+d^{-1}_i\big|\hat{b}_i(t_k,\hat{x}_i(t_k))-b_i(t_k,\hat{x}_i(t_k))\big|+d^{-1}_i\big|I_i(t_k)-\hat{I}_i(t_k)\big|\nonumber\\
	& &+d^{-1}_i\bigg(\zeta_i|\mathcal{U}_i(t_k,x_{t_k})-\hat{\mathcal{U}}_i(t_k,\hat{x}_{t_k})|+\varsigma_i|\mathcal{V}_i(t_k,x_{t_k})-\hat{\mathcal{V}}_i(t_k,\hat{x}_{t_k})|\bigg)\nonumber\\
	&\leq& -d^{-1}_i\big(\beta_i(t_k)+A_i(t_k)\big)|x_i(t_k)-\hat{x}_i(t_k)|+d^{-1}_i\big|\hat{b}_i(t_k,\hat{x}_i(t_k))-b_i(t_k,\hat{x}_i(t_k))\big|\nonumber\\
	& &+d^{-1}_i\big|I_i(t_k)-\hat{I}_i(t_k)\big|+d^{-1}_i\bigg(\zeta_i|\mathcal{U}_i(t_k,x_{t_k})-\hat{\mathcal{U}}_i(t_k,\hat{x}_{t_k})|+\varsigma_i|\mathcal{V}_i(t_k,x_{t_k})-\hat{\mathcal{V}}_i(t_k,\hat{x}_{t_k})|\bigg).\nonumber
\end{eqnarray}
From \eqref{def-U} and \eqref{def-hat-U}, we have

\begin{eqnarray}
         y_i'(t_k)
       &\leq& -d^{-1}_i\big(\beta_i(t_k)+A_i(t_k)\big)|x_i(t_k)-\hat{x}_i(t_k)|\nonumber\\
       & +&d^{-1}_i\big|\hat{b}_i(t_k,\hat{x}_i(t_k))-b_i(t_k,\hat{x}_i(t_k))\big|+d^{-1}_i\big|I_i(t_k)-\hat{I}_i(t_k)\big|\nonumber\\
       & +&d_i^{-1}\sum_{p=1}^{P}\sum_{j,l=1}^{n}\Bigg[\zeta_i\Bigg(\big|c_{ijlp}(t_k)-\hat{c}_{ijlp}(t_k)\big|\big|h_{ijlp}(\hat{x}_j(t_k-\hat{\tau}_{ijp}(t_k)),\hat{x}_l(t_k-\hat{\til\tau}_{ilp}(t_k)))\big|\nonumber\\
       & +&|c_{ijlp}(t_k)|\bigg(\big|h_{ijlp}(x_j(t_k-\hat\tau_{ijp}(t_k)),x_l(t_k-\hat{\til{\tau}}_{ilp}(t_k)))-h_{ijlp}(\hat{x}_j(t_k-\hat\tau_{ijp}(t_k)),\hat{x}_l(t_k-\hat{\til{\tau}}_{ilp}(t_k)))\big|\nonumber\\
       &+&\big|h_{ijlp}(x_j(t_k-\tau_{ijp}(t_k)),x_l(t_k-\til{\tau}_{ilp}(t_k)))-h_{ijlp}(x_j(t_k-\hat\tau_{ijp}(t_k)),x_l(t_k-\hat{\til{\tau}}_{ilp}(t_k)))\big|\bigg)\Bigg)\nonumber\\
       & +&\varsigma_i\Bigg(|d_{ijlp}(t_k)-\hat{d}_{ijlp}(t_k)\big|\left|f_{ijlp}\left(\int_{-\infty}^0g_{ijp}(\hat{x}_j(t_k+s)d\eta_{ijp}(s),\int_{-\infty}^0\til{g}_{ilp}(\hat{x}_l(t_k+s))d\til{\eta}_{ilp}(s)\right)\right|\nonumber\\
       &
       &+|d_{ijlp}(t_k)|\bigg|f_{ijlp}\left(\int_{-\infty}^0g_{ijp}(x_j(t_k+s)d\eta_{ijp}(s),\int_{-\infty}^0\til{g}_{ilp}(x_l(t_k+s))d\til{\eta}_{ilp}(s)\right)\nonumber\\
       &
       &-f_{ijlp}\left(\int_{-\infty}^0g_{ijp}(\hat{x}_j(t_k+s)d\eta_{ijp}(s),\int_{-\infty}^0\til{g}_{ilp}(\hat{x}_l(t_k+s))d\til{\eta}_{ilp}(s)\right)\bigg|\Bigg)\Bigg],\nonumber
\end{eqnarray}
and by hypotheses H5. and H6., we obtain
\begin{eqnarray}
	y'_i(t_k)&\leq& -d_i^{-1}(\beta_i(t_k)+A_i(t_k))|x_i(t_k)-\hat{x}_i(t_k)|\nonumber\\
	& &+d_i^{-1}\left|\hat{b}_i(t_k,\hat{x}_i(t_k))-b_i(t_k,\hat{x}_i(t_k))\right|+d^{-1}_i\left|I_i(t_k)-\hat{I}_i(t_k)\right|\nonumber\\
	& &+d_i^{-1}\sum_{p=1}^{P}\sum_{j,l=1}^{n}\Bigg[\zeta_i\bigg(\big|c_{ijlp}(t_k)-\hat{c}_{ijlp}(t_k)\big|\big|h_{ijlp}(\hat{x}_j(t_k-\hat{\tau}_{ijp}(t_k)),\hat{x}_l(t_k-\hat{\til{\tau}}_{ilp}(t_k)))\big|\nonumber\\
	& & + |c_{ijlp}(t_k)|\left(\gamma_{ijlp}^{(1)}|x_j(t_k-\hat\tau_{ijp}(t_k))-\hat{x}_j(t_k-\hat\tau_{ijp}(t_k))|\right.\nonumber\\
	&  & +\left.\gamma_{ijlp}^{(2)}|x_l(t_k-\hat{\til{\tau}}_{ilp}(t_k))-\hat{x}_l(t_k-\hat{\til{\tau}}_{ilp}(t_k))\big|\right)\nonumber\\
	& &
	+|c_{ijlp}(t_k)|\left(\gamma_{ijlp}^{(1)}|x_j(t_k-\tau_{ijp}(t_k))-x_j(t_k-\hat\tau_{ijp}(t_k))|\right.\nonumber\\
	& &+\left.\gamma_{ijlp}^{(2)}|x_l(t_k-\til{\tau}_{ilp}(t_k))-x_l(t_k-\hat{\til{\tau}}_{ilp}(t_k))\big|\right)\bigg)\nonumber\\
	&
	&
	+\varsigma_i\Bigg(\big|d_{ijlp}(t_k)-\hat{d}_{ijlp}(t_k)\big|\left|f_{ijlp}\left(\int_{-\infty}^0g_{ijp}(\hat{x}_j(t_k+s)d\eta_{ijp}(s),\int_{-\infty}^0\til{g}_{ilp}(\hat{x}_l(t_k+s))d\til{\eta}_{ilp}(s)\right)\right|\nonumber\\
	&
	&+|d_{ijlp}(t_k)|\left(\mu_{ijlp}^{(1)}\xi_{ijp}\int_{-\infty}^0|x_j(t_k+s)-\hat{x}_j(t_k+s)|d\eta_{ijp}(s)\right.\nonumber\\
	& &
	+\left.\mu_{ijlp}^{(2)}\til{\xi}_{ilp}\int_{-\infty}^0|x_l(t_k+s)-\hat{x}_l(t_k+s)|d\til{\eta}_{ilp}(s)\right)\Bigg)\Bigg].\nonumber
\end{eqnarray}
 From H2. and \eqref{def-y(t)}, we have
 $$
   (d_i\ov{a}_i)^{-1}|x_i(t)-\hat{x}_i(t)|\leq y_i(t)\leq(d_i\un{a}_i)^{-1}|x_i(t)-\hat{x}_i(t)|,\Es t\geq0,\,i=1,\ldots,n,
 $$
 and consequently,
\begin{eqnarray}\label{2.3}
y'_i(t_k)&\leq& -(\beta_i(t_k)+A_i(t_k))\un{a}_iy_i(t_k)+\delta_i(t_k)\nonumber\\
& &+\sum_{p=1}^{P}\sum_{j,l=1}^{n}\bigg[\zeta_i|c_{ijlp}(t_k)|\left(\gamma_{ijlp}^{(1)}\ov{a}_j\frac{d_j}{d_i}y_j(t_k-\hat{\tau}_{ijp}(t_k))+\gamma_{ijlp}^{(2)}\ov{a}_l\frac{d_l}{d_i}y_l(t_k-\hat{\til{\tau}}_{ilp}(t_k))\right)\nonumber\\
&
&
+\varsigma_i|d_{ijlp}(t_k)|\left(\mu_{ijlp}^{(1)}\xi_{ijp}\ov{a}_j\frac{d_j}{d_i}\int_{-\infty}^0y_j(t_k+s)d\eta_{ijp}(s)+\mu_{ijlp}^{(2)}\til{\xi}_{ilp}\ov{a}_l\frac{d_l}{d_i}\int_{-\infty}^0y_l(t_k+s)d\til{\eta}_{ilp}(s)\right)\bigg],\nonumber\\
\end{eqnarray}
where 
\begin{eqnarray}
\delta_{i}(t)&=&d_i^{-1}\left|\hat{b}_i(t,\hat{x}_i(t))-b_i(t,\hat{x}_i(t))\right|+d^{-1}_i\left|I_i(t)-\hat{I}_i(t)\right|\nonumber\\
& &+d^{-1}_i\sum_{p=1}^{P}\sum_{j,l=1}^{n}\Bigg[\zeta_i\bigg(\big|c_{ijlp}(t)-\hat{c}_{ijlp}(t)\big|\big|h_{ijlp}(\hat{x}_j(t-\hat{\tau}_{ijp}(t)),\hat{x}_l(t-\hat{\til{\tau}}_{ilp}(t)))\big|\nonumber\\
& &	+|c_{ijlp}(t)|\left(\gamma_{ijlp}^{(1)}|x_j(t-\tau_{ijp}(t))-x_j(t-\hat\tau_{ijp}(t))|\right.+\left.\gamma_{ijlp}^{(2)}|x_l(t-\til{\tau}_{ilp}(t))-x_l(t-\hat{\til{\tau}}_{ilp}(t))\big|\right)\nonumber\\
& &
	+\varsigma_i\big|d_{ijlp}(t)-\hat{d}_{ijlp}(t)\big|\left|f_{ijlp}\left(\int_{-\infty}^0g_{ijp}(\hat{x}_j(t+s)d\eta_{ijp}(s),\int_{-\infty}^0\til{g}_{ilp}(\hat{x}_l(t+s))d\til{\eta}_{ilp}(s)\right)\right|\Bigg].\nonumber
\end{eqnarray}

By H1., $t\mapsto b_j(t,w)$ is bounded for all $w\in\er$ and $j=1,\ldots,n$, and by H3., $w\mapsto b_j(t,w)$ is a non-decreasing function for all $t\in[0,+\infty)$. As $x_j(t)$ is bounded, then $t\mapsto b_j(t,x_j(t))$ is a bounded function. Moreover from H1., \eqref{1}, and the continuity of $F_i$, $h_{ijlp}$, $f_{ijlp}$, $g_{ijp}$, $\til g_{ilp}$, we obtain that $x'_j(t)$ is a bounded function on $(0,+\infty)$, thus $x_j(t)$ is uniformly continuous for all $j=1,\ldots,n$. Finally, as $\hat{x}(t)$ is bounded, by H5. and from \eqref{asymptotic-system}, we conclude that 
\begin{eqnarray}\label{24}
	\lim_{t\to+\infty}\delta_{i}(t)=0.
\end{eqnarray}
From \eqref{N}, \eqref{lim-atraso-discreto} and \eqref{2.3}, for all $k\geq k_0$, we have
\begin{eqnarray*}\label{eq24a}
	y'_i(t_k)&\leq& -(\beta_i(t_k)+A_i(t_k))\un{a}_i(u-\varepsilon)+\delta_i(t_k),\nonumber\\
	& &+\sum_{p=1}^{P}\sum_{j,l=1}^{n}\bigg[\zeta_i|c_{ijlp}(t_k)|\left(\gamma_{ijlp}^{(1)}\ov{a}_j\frac{d_j}{d_i}+\gamma_{ijlp}^{(2)}\ov{a}_l\frac{d_l}{d_i}\right)(u+\varepsilon)\nonumber\\
	&
	&
	+\varsigma_i|d_{ijlp}(t_k)|\mu_{ijlp}^{(1)}\xi_{ijp}\ov{a}_j\frac{d_j}{d_i}\left(\int_{-\infty}^{-T}y_j(t_k+s)d\eta_{ijp}(s)+\int_{-T}^0y_j(t_k+s)d\eta_{ijp}(s)\right)\nonumber\\
	& &
	+\varsigma_i|d_{ijlp}(t_k)|\mu_{ijlp}^{(2)}\til{\xi}_{ilp}\ov{a}_l\frac{d_l}{d_i}\left(\int_{-\infty}^{-T}y_l(t_k+s)d\til{\eta}_{ilp}(s)+\int_{-T}^0y_l(t_k+s)d\til{\eta}_{ilp}(s)\right)\bigg]\nonumber,\\
\end{eqnarray*}
and from \eqref{lim-integral}, we obtain
\begin{eqnarray}\label{eq24a-2}
	y'_i(t_k)&\leq& -(\beta_i(t_k)+A_i(t_k))\un{a}_i(u-\varepsilon)+\delta_i(t_k)\nonumber\\
	& &+\sum_{p=1}^{P}\sum_{j,l=1}^{n}\bigg[\zeta_i|c_{ijlp}(t_k)|\left(\gamma_{ijlp}^{(1)}\ov{a}_j\frac{d_j}{d_i}+\gamma_{ijlp}^{(2)}\ov{a}_l\frac{d_l}{d_i}\right)(u+\varepsilon)\nonumber\\
	&
	&
	+\varsigma_i|d_{ijlp}(t_k)|\mu_{ijlp}^{(1)}\xi_{ijp}\ov a_j\frac{d_j}{d_i}\left(\varepsilon+\int_{-T}^0y_j(t_k+s)d\eta_{ijp}(s)\right)\nonumber\\
	& &
	+\varsigma_i|d_{ijlp}(t_k)|\mu_{ijlp}^{(2)}{\til\xi}_{ilp}\ov a_l\frac{d_l}{d_i}\left(\varepsilon+\int_{-T}^0y_l(t_k+s)d\til{\eta}_{ilp}(s)\right)\bigg],\nonumber
\end{eqnarray}
and finally, as $t_k>2T$, $|y(t)|\leq u+\varepsilon$ for $t\geq T$, and $\int_{-\infty}^0d\eta_{ijp}(s)=\int_{-\infty}^0d\til\eta_{ilp}(s)=1$, we obtain
\begin{eqnarray}\label{eq24a-3}
	y'_i(t_k)&\leq& -(\beta_i(t_k)+A_i(t_k))\un{a}_i(u-\varepsilon)+\delta_i(t_k)\nonumber\\
	& &+\sum_{p=1}^{P}\sum_{j,l=1}^{n}\bigg[\zeta_i|c_{ijlp}(t_k)|\left(\gamma_{ijlp}^{(1)}\ov{a}_j\frac{d_j}{d_i}+\gamma_{ijlp}^{(2)}\ov{a}_l\frac{d_l}{d_i}\right)(u+\varepsilon)\nonumber\\
	&
	&
	+\varsigma_i|d_{ijlp}(t_k)|\left(\mu_{ijlp}^{(1)}\xi_{ijp}\ov{a}_j\frac{d_j}{d_i}+\mu_{ijlp}^{(2)}\til{\xi}_{ilp}\ov{a}_l\frac{d_l}{d_i}\right)(u+2\varepsilon)\bigg].
\end{eqnarray}
Since $\dst\lim_{k\to +\infty} y_i'(t_k)=0$, then by letting $\varepsilon\rightarrow 0^+$ and $k\rightarrow +\infty$, it follows from \eqref{2.1}, \eqref{24}, \eqref{eq24a-3}, and hypothesis H7. that
\begin{eqnarray}
	0\leq \bigg[\dst\limsup_{k\to +\infty}\bigg(-(\beta_i(t_k)+A_i(t_k))\lefteqn{\un{a}_i+\sum_{p=1}^{P}\sum_{j,l=1}^{n}\bigg(\zeta_i|c_{ijlp}(t_k)|\left(\gamma_{ijlp}^{(1)}\ov{a}_j\frac{d_j}{d_i}+\gamma_{ijlp}^{(2)}\ov{a}_l\frac{d_l}{d_i}\right)}\nonumber\\
	& +\varsigma_i|d_{ijlp}(t_k)|\left(\mu_{ijlp}^{(1)}\xi_{ijp}\ov{a}_j\dst\frac{d_j}{d_i}+\mu_{ijlp}^{(2)}\til{\xi}_{ilp}\ov{a}_l\frac{d_l}{d_i}\right)\bigg)\bigg]u<0,\nonumber
	\end{eqnarray}
which is a contradiction. Thus $u=0$.
\end{proof}

Obviously, the system (\ref{1}) can be considered as an asymptotic system of itself. Thus, we have the following result
\begin{corolario}\label{cor:criterio-si-proprio}
Assume H1.-H7. hold. 

If $x(t)$ and $\hat{x}(t)$ are solutions of systems \eqref{1} with bounded initial conditions, then 
$$
  \lim\limits_{t \to +\infty}|x(t)-\hat{x}(t)|=0.
$$
\end{corolario}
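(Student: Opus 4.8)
The plan is to recognize that the corollary is simply the special case of Theorem~\ref{convegrence of solutions theorem} in which the asymptotic system \eqref{def-asymp-system} is taken to be \eqref{1} itself, so the whole argument reduces to checking that \eqref{1} qualifies as an asymptotic system of itself in the sense of Definition~\ref{def:asymptotic-system}.

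Concretely, I would choose, for all $i,j,l=1,\ldots,n$ and $p=1,\ldots,P$, the data $\hat b_i=b_i$, $\hat c_{ijlp}=c_{ijlp}$, $\hat d_{ijlp}=d_{ijlp}$, $\hat I_i=I_i$, $\hat\tau_{ijp}=\tau_{ijp}$ and $\hat{\til\tau}_{ilp}=\til\tau_{ilp}$. These functions are continuous because the corresponding functions of \eqref{1} are, and $\hat b_i=b_i$ satisfies \eqref{hyp-H3^} with $\hat\beta_i=\beta_i$ precisely by hypothesis H3.. Moreover, each of the differences occurring in \eqref{asymptotic-system}, namely $\beta_i-\hat\beta_i$, $b_i(\cdot,w(\cdot))-\hat b_i(\cdot,w(\cdot))$, $c_{ijlp}-\hat c_{ijlp}$, $d_{ijlp}-\hat d_{ijlp}$, $\tau_{ijp}-\hat\tau_{ijp}$, $I_i-\hat I_i$ and $\til\tau_{ilp}-\hat{\til\tau}_{ilp}$, vanishes identically, so all the limits in \eqref{asymptotic-system} are trivially $0$, for every bounded continuous $w$. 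Hence \eqref{def-asymp-system} built from this data is an asymptotic system of \eqref{1}, and it coincides with \eqref{1}.

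With this identification in place, I would invoke Theorem~\ref{convegrence of solutions theorem}: since H1.--H7. hold for \eqref{1} by assumption and the asymptotic system just constructed is \eqref{1} itself, the theorem yields $\lim_{t\to+\infty}|x(t)-\hat x(t)|=0$ for any two solutions $x(t)$, $\hat x(t)$ of \eqref{1} with bounded initial conditions, which is exactly the claim. I do not expect any genuine obstacle here: the only point needing a word of justification is that Definition~\ref{def:asymptotic-system} permits the trivial choice of hat-data, and this is immediate; all the analytic work has already been carried out in Lemmas~\ref{lemma_1} and \ref{teo:bounded-solutions} and in Theorem~\ref{convegrence of solutions theorem}.
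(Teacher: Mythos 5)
Your proposal is correct and follows exactly the paper's route: the paper also obtains this corollary by observing that system \eqref{1} is an asymptotic system of itself (taking all hat-data equal to the original data, which trivially satisfies \eqref{hyp-H3^} and \eqref{asymptotic-system}) and then applying Theorem \ref{convegrence of solutions theorem}. Your more explicit verification of Definition \ref{def:asymptotic-system} is just a spelled-out version of the paper's one-line remark.
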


\section{Applications}
The following examples demonstrate the efficiency of our results, and we provide a comparison with various stability criteria from the literature.

\exemplo
Consider the following low-order Cohen--Grossberg neural network model:
\begin{eqnarray}\label{eq:static-model with k}
	x_i'(t)=a_i(x_i(t))\left[-b_i(t,x_i(t))+G_i\left(\sum_{j=1}^nc_{ij}(t)\int_0^{+\infty}x_j(t-u)K_{ij}(u)du\right)\right],
\end{eqnarray}
for $t\geq0$ and $i=1,\ldots,n$, where $a_i:\er\to(0,+\infty)$, $b_i:[0,+\infty)\times\er\to\er$, $c_{ij}:[0,+\infty)\to\er$, $G_i:\er\to\er$, and $K_{ij}:[0,+\infty)\to[0,+\infty)$ are continuous functions such that \begin{eqnarray}\label{ex:kernel1}
      \int_0^{+\infty}K_{ij}(u)du=1,
\text{ for all } i,j=1,\ldots,n.
\end{eqnarray}
 
Considering $\eta_{ij}:(-\infty,0]\to\er$ defined by  $\eta_{ij}(s)=\int_{-\infty}^sK_{ij}(-v)dv,\, s\in(-\infty,0]$, for each $i,j=1,\ldots,n$, we have $\eta_{ij}$ non-decreasing such that  $\eta_{ij}(0)-\eta_{ij}(-\infty)=1$.

Therefore, the model \eqref{eq:static-model with k} can be written in the form 
    \begin{eqnarray*}\label{static-model}
 	x_i'(t)=a_i(x_i(t))\left[-b_i(t,x_i(t))+G_i\left(\sum_{j=1}^nc_{ij}(t)\int_{-\infty}^0x_j(t+s)d\eta_{ij}(s)\right)\right],
    \end{eqnarray*}
which is a particular situation of \eqref{1}. Consequently, from Corollary \ref{cor:criterio-si-proprio}, we obtain the following global attractivity criterion for model \eqref{eq:static-model with k}.
 \begin{corolario}\label{cor-crit-static-model}
 	Assume that H3. and \eqref{ex:kernel1} hold and, for each $i,j=1,\ldots,n$, $c_{ij}$ is a bounded function, $G_i$ is a Lipshcitz function with Lipschitz constant $\varsigma_i>0$, and there exist $\un{a}_i,\ov{a}_i>0$ such that 
 	\begin{eqnarray}\label{lim:ais}
 	  \un{a}_i\leq a_i(u)\leq\ov{a}_i,\Es\forall u\in\er.
 	\end{eqnarray}
 	 If there exists $d=(d_1,\ldots,d_n)>0$ such that,
 	\begin{eqnarray*}
 		\limsup_{t\to+\infty}\left(\un{a}_i\beta_i(t)d_i-\ov{a}_i\sum_{j=1}^{n}\varsigma_i|c_{ij}(t)|d_j\right)>0,\Es i=1,\ldots,n,
 	\end{eqnarray*}
 	then any two solutions $x(t)$ and $\hat{x}(t)$ of \eqref{eq:static-model with k}, with bounded initial condition, verify
 	$$
 	  \lim\limits_{t \to +\infty}|x(t)-\hat{x}(t)|=0.
 	$$ 
 \end{corolario}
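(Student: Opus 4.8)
The plan is to exhibit \eqref{eq:static-model with k} as a particular case of the general system \eqref{1} and then to quote Corollary~\ref{cor:criterio-si-proprio}. Concretely, I would take $P=1$ and suppress the index $p$; set $a_i(t,u):=a_i(u)$, so that H2. holds with the constants of \eqref{lim:ais} and, since $a_i$ does not depend on $t$, with $A_i\equiv 0$; keep $b_i$ and its $\beta_i$ from H3.; set $I_i\equiv 0$ and $F_i(u_1,u_2):=G_i(u_2)$; make the discrete-delay data trivial, $c_{ijl1}\equiv 0$, $h_{ijl1}\equiv 0$, $\tau_{ij1}\equiv\til{\tau}_{il1}\equiv 0$; and take the distributed-delay data to be $g_{ij1}:=\mathrm{id}$, $\til{g}_{il1}:=\mathrm{id}$, $\eta_{ij1}:=\eta_{ij}$, $\til{\eta}_{il1}$ any fixed admissible weight of total mass $1$, $d_{ijl1}(t):=c_{ij}(t)\,\delta_{jl}$, and $f_{ijl1}(u_1,u_2):=u_1$. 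With these choices, in the notation \eqref{def-U} one gets $\mathcal{U}_i(t,\varphi)\equiv 0$ and $\mathcal{V}_i(t,\varphi)=\sum_{j=1}^n c_{ij}(t)\int_{-\infty}^0\varphi_j(s)\,d\eta_{ij}(s)$, so that $F_i(\mathcal{U}_i,\mathcal{V}_i)=G_i\!\left(\sum_{j=1}^n c_{ij}(t)\int_{-\infty}^0 x_j(t+s)\,d\eta_{ij}(s)\right)$; by the definition of $\eta_{ij}$ and the substitution $u=-s$ this equals the right-hand side of \eqref{eq:static-model with k}, while \eqref{ex:kernel1} gives $\eta_{ij}(0)-\eta_{ij}(-\infty)=1$, as \eqref{1} requires.

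Next I would verify H1.--H7. for this instance. Hypotheses H1.--H6. are routine bookkeeping: H1. follows from the boundedness of $c_{ij}$ (hence of $d_{ijl1}$) and of $b_i(\cdot,0)$, the other coefficients being constant or identically zero; H3. is assumed; H4. is immediate because the delays vanish; H5. holds because $h_{ijl1}\equiv 0$ is Lipschitz with any positive constants and $f_{ijl1}(u_1,u_2)=u_1$ meets the estimate in H5. with $\mu^{(1)}_{ijl1}=1$ and $\mu^{(2)}_{ijl1}$ an arbitrarily small positive number; and H6. holds with $\xi_{ij1}=\til{\xi}_{il1}=1$, with $\varsigma_i$ the Lipschitz constant of $G_i$, and with $\zeta_i$ any positive number (its value is immaterial, since $\mathcal{U}_i\equiv 0$).

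The substantive step is H7.. Plugging the above identifications in, every summand carrying $c_{ijl1}$ disappears and $d_{ijl1}(t)=c_{ij}(t)\delta_{jl}$ collapses $\sum_{j,l}$ to a single sum over $j$; since $A_i\equiv 0$, hypothesis H7. becomes
\[
  \limsup_{t\to+\infty}\left[-\un{a}_i\,\beta_i(t)+\sum_{j=1}^n\varsigma_i\,|c_{ij}(t)|\,\ov{a}_j\,\frac{d_j}{d_i}\left(\mu^{(1)}_{ijj1}+\mu^{(2)}_{ijj1}\right)\right]<0,\Es i=1,\ldots,n.
\]
Here $\mu^{(1)}_{ijj1}=1$ is forced, whereas $\mu^{(2)}_{ijj1}$ is at our disposal; replacing it by $0$ alters the bracket by $\mu^{(2)}_{ijj1}\sum_j\varsigma_i|c_{ij}(t)|\ov{a}_j\,d_j/d_i$, which is bounded uniformly in $t$ because the $c_{ij}$ are bounded. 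Hence the strict inequality assumed in the statement --- which, after multiplying by $d_i$ and letting $\mu^{(2)}_{ijj1}\to 0$, is exactly the displayed condition (with the $j$-th term weighted by $\ov{a}_j$) --- implies H7. for all sufficiently small $\mu^{(2)}_{ijj1}$. All hypotheses of Corollary~\ref{cor:criterio-si-proprio} then hold for \eqref{eq:static-model with k}, and its conclusion yields $\lim_{t\to+\infty}|x(t)-\hat{x}(t)|=0$. The one point I expect to require care is precisely this translation of H7.: keeping straight which amplification bounds ($\ov{a}_j$ versus $\ov{a}_i$) and which ratios $d_j/d_i$ appear, and checking that the freedom in the auxiliary Lipschitz constants ($\mu^{(2)}_{ijj1}$, the $\gamma^{(\cdot)}_{ijl1}$, $\zeta_i$) suffices to deduce H7. from the tidier inequality printed in the corollary.
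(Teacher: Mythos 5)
Your route is exactly the paper's: the paper proves this corollary simply by observing that \eqref{eq:static-model with k}, rewritten with the measures $\eta_{ij}$ built from the kernels $K_{ij}$, is a particular situation of \eqref{1}, and then invoking Corollary \ref{cor:criterio-si-proprio}; your identification of the data and verification of H1.--H6. is the bookkeeping the paper leaves implicit, and it is correct (trivialize the discrete-delay block, $F_i(u_1,u_2)=G_i(u_2)$, $d_{ijl1}(t)=c_{ij}(t)\delta_{jl}$, $f_{ijl1}(u_1,u_2)=u_1$, $g_{ij1}=\mathrm{id}$).

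The one point you flagged but did not close is real, though small. Specializing H7.\ with your choices and letting $\mu^{(2)}_{ijj1}\to0$ gives $\limsup_{t\to+\infty}\big[-\un{a}_i\beta_i(t)+\sum_{j=1}^n\varsigma_i|c_{ij}(t)|\,\ov{a}_j\,d_j/d_i\big]<0$, so the $j$-th term carries the weight $\ov{a}_j$, whereas the inequality displayed in the corollary carries the single factor $\ov{a}_i$ outside the sum; for a fixed $d$ these are not the same inequality, so your claim that the corollary's hypothesis ``is exactly'' the specialized H7.\ is not literally true. The gap closes in one line: if $d=(d_1,\dots,d_n)>0$ satisfies the corollary's condition, take $d^*_j=d_j/\ov{a}_j$; then $\sum_{j}\varsigma_i|c_{ij}(t)|\,\ov{a}_j d^*_j=\sum_j\varsigma_i|c_{ij}(t)|d_j$ and $\un{a}_i\beta_i(t)d^*_i=\un{a}_i\beta_i(t)d_i/\ov{a}_i$, so $d^*$ satisfies the H7.-form condition. (As in the paper itself, the $\limsup$ in the corollary's hypothesis should be read as the condition produced by H7., namely $\limsup_{t\to+\infty}\big[-\un{a}_i\beta_i(t)d_i+\ov{a}_i\sum_{j}\varsigma_i|c_{ij}(t)|d_j\big]<0$.) With that rescaling observation added, your argument is complete and coincides with the paper's.
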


In case of \eqref{eq:static-model with k} being an $\omega-$periodic model, for some $\omega>0$, from \cite[Theorem 5.2]{submitted}, we obtain sufficient conditions for the existence of an $\omega-$periodic solution of \eqref{eq:static-model with k} and the following result holds.
 \begin{corolario}\label{cor-generaliza-ncube}
 	Assume that \eqref{ex:kernel1} holds and, for each $i,j=1,\ldots,n$,  $G_i$ is a Lipshcitz function with Lipschitz constant $\varsigma_i>0$, $a_i$ verifies \eqref{lim:ais},  the functions $t\to b_i(t,u)$ and $c_{ij}$ are $\omega-$periodic for all $u\in\er$, and  there exist $\omega-$periodic continuous functions $\beta_i,\beta_i^*:[0,+\infty)\to(0,+\infty)$ such that
 	\begin{eqnarray}\label{cod-H3-para-periodic}
 		\beta_i(t)\leq\frac{b_i(t,u)-b_i(t,v)}{u-v}\leq \beta_i^*(t), \Es \forall t\in[0,\omega], \, u,v \in\mathbb{R},  u\neq v.
 	\end{eqnarray}
 
 If there exists $d=(d_1,\ldots,d_n)>0$ such that,
 \begin{eqnarray}\label{existence of periodic for static}
 \un{a}_i\beta_i(t)d_i>\ov{a}_i\sum_{j=1}^{n}\varsigma_i|c_{ij}(t)|d_j,\Es \forall t\in[0,\omega],\, i=1,\ldots,n,
 \end{eqnarray}
 then there exists an $\omega-$periodic solution of \eqref{eq:static-model with k}, $\tilde{x}(t)$, which is globally attractive in the set of the solutions of \eqref{eq:static-model with k}, $x(t)$,  with bounded initial condition, i.e.
 $$
 \lim\limits_{t \to +\infty}|x(t)-\tilde{x}(t)|=0.
 $$ 
 \end{corolario}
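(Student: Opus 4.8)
The plan is to split the statement into its two assertions — existence of an $\omega$-periodic solution, and its global attractivity — and to obtain each from a result already in hand: \cite[Theorem 5.2]{submitted} for the existence, and Corollary \ref{cor-crit-static-model} for the attractivity. No new estimate on the flow of \eqref{eq:static-model with k} should be needed, since \eqref{eq:static-model with k} has already been recognized as a particular case of \eqref{1}.

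First I would verify that, under the present hypotheses, the $\omega$-periodic version of \eqref{eq:static-model with k} meets the assumptions of \cite[Theorem 5.2]{submitted}: the coefficients $t\mapsto b_i(t,u)$ and $c_{ij}$ are $\omega$-periodic and continuous, $a_i$ satisfies the two-sided bound \eqref{lim:ais}, $G_i$ is globally Lipschitz, the kernels are normalized by \eqref{ex:kernel1}, and \eqref{cod-H3-para-periodic} supplies $\omega$-periodic continuous bounds $\beta_i\le (b_i(t,u)-b_i(t,v))/(u-v)\le\beta_i^*$; the strict inequality \eqref{existence of periodic for static} on the compact interval $[0,\omega]$ provides the dissipativity required by that theorem. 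This yields an $\omega$-periodic solution $\tilde x(t)$ of \eqref{eq:static-model with k}. Since $\tilde x$ is periodic and continuous it is bounded on $\mathbb{R}$, so $\tilde x_{0}\in BC$ and $\tilde x$ is an admissible solution in the sense of Corollary \ref{cor-crit-static-model}.

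Next I would check the single remaining hypothesis of Corollary \ref{cor-crit-static-model}, namely that $\limsup_{t\to+\infty}\big(\underline{a}_i\beta_i(t)d_i-\overline{a}_i\sum_{j=1}^n\varsigma_i|c_{ij}(t)|d_j\big)>0$ for each $i$. The function inside is continuous and $\omega$-periodic, and by \eqref{existence of periodic for static} it is strictly positive for every $t\in[0,\omega]$; hence it attains a positive minimum on $[0,\omega]$, so its $\limsup$ at $+\infty$ is bounded below by that minimum, in particular positive. Applying Corollary \ref{cor-crit-static-model} with $\hat x=\tilde x$ then gives $\lim_{t\to+\infty}|x(t)-\tilde x(t)|=0$ for every solution $x(t)$ of \eqref{eq:static-model with k} with bounded initial condition, which is the claimed global attractivity.

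The only genuinely delicate point is the first step: one must line up the present hypotheses with the exact statement of the external result \cite[Theorem 5.2]{submitted}. In particular, the upper bound $\beta_i^*$ in \eqref{cod-H3-para-periodic} plays no role in Corollary \ref{cor-crit-static-model} and is presumably used solely to run the fixed-point argument behind the periodic existence theorem; everything else — boundedness of the coefficients, the strict inequality \eqref{existence of periodic for static}, and the normalization \eqref{ex:kernel1} — transfers directly. Once existence is secured, the attractivity is an immediate consequence of the already established Corollary \ref{cor-crit-static-model}, so no further work is required.
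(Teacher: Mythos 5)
Your proposal is correct and follows essentially the same route as the paper: existence of the $\omega$-periodic solution from \cite[Theorem 5.2]{submitted} applied to the periodic model \eqref{eq:static-model with k}, and global attractivity by applying Corollary \ref{cor-crit-static-model} with $\hat{x}=\tilde{x}$, the limsup hypothesis being verified exactly as you do, via periodicity and the strict inequality \eqref{existence of periodic for static} on the compact interval $[0,\omega]$.
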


\begin{rem}
	In \cite{ncube2020existence}, sufficient conditions for the existence and global asymptotic stability of an equilibrium point of the following autonomous static neural network model 
	\begin{eqnarray}\label{eq:static-model-INcube}
		x_i'(t)=-x_i(t)+G_i\left(\sum_{j=1}^nc_{ij}\int_0^{+\infty}x_j(t-u)K_{ij}(u)du\right),\,t\geq0,\,i=1,\ldots,n,
	\end{eqnarray}
	were established. We stress that model \eqref{eq:static-model-INcube} is a particular situation of \eqref{eq:static-model with k}, and assumptions in Corollary \ref{cor-generaliza-ncube} are weaker than the ones in \cite[Theorem 3.2]{ncube2020existence}. In fact, conditions \eqref{lim:ais} and \eqref{cod-H3-para-periodic} trivially hold in model \eqref{eq:static-model-INcube}, for each $i,j=1,\ldots,n$, they assumed that $G_i$ satisfies stronger conditions than  being Lipschitz and the kernel function, $K_{ij}$, verifies \eqref{ex:kernel1} with the additional assumptions
	$$
	\int_0^{+\infty}uK_{ij}(u)du<+\infty\Es\text{and}\Es K_{ij}=K_{ji}.
	$$
	For model  \eqref{eq:static-model-INcube}, inequality \eqref{existence of periodic for static} reads as
	$$
	d_i>\sum_{j=1}^{n}\varsigma_i|c_{ij}|d_j,\Es i=1,\ldots,n,
	$$
	which is equivalent to the matrix
	\begin{eqnarray*}
		\mathcal{N}=I_n-\big[\varsigma_i|c_{ij}|\big]_{i,j=1}^n,
	\end{eqnarray*}
	where $I_n$ denotes the $n$-dimension identity matrix, being a non-singular M-matrix (see \cite{fiedler2008special}), which is assumed in \cite[Theorem 3.2]{ncube2020existence}.
\end{rem}

\exemplo 
\label{high-order-ex}
Considering in \eqref{1} and \eqref{def-asymp-system} 
$$F_i(u,v)=G_i(u)+H_i(v),\Es \text{ for all } i=1,\ldots,n,\, \text{ and } u,v\in\er,$$
where  $G_i,H_i:\er\mapsto\er$ are continuous functions, we have the following high-order Cohen--Grossberg neural network models, 
     \begin{eqnarray} \label{model-studied in our work}
	x'_i(t)&=&a_i(t,x_i(t))\bigg[-b_i(t,x_i(t))+G_i\bigg(\dst\sum_{p=1}^{P}\sum_{j,l=1}^{n}c_{ijlp}(t)h_{ijlp}\big(x_j(t-\tau_{ijp}(t)),x_l(t-{\til{\tau}}_{ilp}(t))\big)\bigg)\nonumber\\
	& &+H_i\bigg(\dst\sum_{q=1}^{Q}\sum_{j,l=1}^{n}d_{ijlq}(t)f_{ijlq}\left(\int_{-\infty}^0g_{ijq}(x_j(t+s))d\eta_{ijq}(s),\int_{-\infty}^0{\til{g}}_{ilq}(x_l(t+s))d{\til{\eta}}_{ilq}(s)\right)\bigg)\nonumber\\
	& &+I_i(t)\bigg],\Es t\geq 0,\Es \,i=1,\ldots,n,
\end{eqnarray}
and
\begin{eqnarray} \label{ass-model-studied inour work}
	x'_i(t)&=&a_i(t,x_i(t))\bigg[-\hat{b}_i(t,x_i(t))+G_i\bigg(\dst\sum_{p=1}^{P}\sum_{j,l=1}^{n}\hat{c}_{ijlp}(t)h_{ijlp}\big(x_j(t-\hat{\tau}_{ijp}(t)),x_l(t-\hat{{\til{\tau}}}_{ilp}(t))\big)\bigg)\nonumber\\
	& &+H_i\bigg({\dst\sum_{q=1}^{Q}}\sum_{j,l=1}^{n}\hat{d}_{ijlq}(t)f_{ijlq}\left(\int_{-\infty}^0g_{ijq}(x_j(t+s))d\eta_{ijq}(s),\int_{-\infty}^0{\til{g}}_{ilq}(x_l(t+s))d{\til{\eta}}_{ilq}(s)\right)\bigg)\nonumber\\
	& &+\hat I_i(t)\bigg],\Es t\geq 0,\Es \,i=1,\ldots,n,
\end{eqnarray}
respectively.

If H3., \eqref{hyp-H3^}, and \eqref{asymptotic-system} hold, then the system \eqref{ass-model-studied inour work} is an asymptotic system of \eqref{model-studied in our work}. In \cite{submitted}, the 
 following result for the existence of a periodic solution of \eqref{ass-model-studied inour work}  was established.

\begin{teorema}\cite[Theorem 5.2]{submitted}\label{periodic-soln-exist}
    Assume H2., H5., H6., and the following hypotheses:
    \begin{enumerate}
        \item For each $i=1,\dots,n$, there exist $\zeta_i$, $\varsigma_i>0$ such that
        $$\begin{array}{cc}
          |G_i(u)-G_i(v)|\leq\zeta_i|u-v|,   & |H_i(u)-H_i(v)|\leq\varsigma_i|u-v|,\, u,v\in \er{\color{blue}{;}} \\
        \end{array}$$

        \item There is $\omega>0$ such that, for each $i,j,l=1,\ldots,n$,  $p=1,\ldots,P$, and $q=1,\ldots,Q$,
	{$$
	\begin{array}{llll}
	a_i(t,u)=a_i(t+\omega,u), & \hat c_{ijlp}(t)=\hat c_{ijlp}(t+\omega), & \hat\tau_{ijp}(t)=\hat\tau_{ijp}(t+\omega),\\
	\hat {b}_i(t,u)=\hat{b}_i(t+\omega,u), & \hat {d}_{ijlq}(t)=\hat{d}_{ijlq}(t+\omega), & \hat{\til{\tau}}_{ijp}(t)=\hat {\til{\tau}}_{ijp}(t+\omega), \text{ and }\\
   \hat I_i(t)=\hat I_i(t+\omega),& \text{ for all }\,  t\geq0 \text{ and  }
   u\in\er;
	\end{array}	 	
$$}
\item  For each $i=1,\dots,n$, there exist $\omega-$periodic continuous functions $\hat{\beta}_i,\hat{\beta}_i^*:[0,+\infty)\to(0,+\infty)$ such that
	\begin{align*}
		\hat{\beta}_i(t)\leq\frac{\hat b_i(t,u)-\hat b_i(t,v)}{u-v}\leq \hat\beta_i^*(t), \Es \forall t\in[0,\omega], \,  u,v\in \er,\, u\neq v;
	\end{align*}
\item 
	There exists $d=(d_1,\ldots,d_n)>0$ such that for all $t\in[0,\omega]$, and $i=1,\ldots,n$,	
	\begin{eqnarray}\label{4.3b}
		\hat\beta_i(t)>\sum_{j,l=1}^{n}\lefteqn{\left[{{\sum_{p=1}^{P}}}\dst\zeta_i|\hat c_{ijlp}(t)|\left(\frac{d_j}{d_i}\gamma^{(1)}_{ijlp}+\frac{d_l}{d_i}\gamma^{(2)}_{ijlp}\right)\right.}\nonumber\\
		&+\dst\left.{{\sum_{q=1}^{Q}}}\varsigma_i|\hat d_{ijlq}(t)|\left(\frac{d_j}{d_i}\mu^{(1)}_{ijlq}\xi_{ijq}+\frac{d_l}{d_i}\mu^{(2)}_{ijlq}\til{\xi}_{ilq}\right)\right].
	\end{eqnarray}
    \end{enumerate}
        Then, the system \eqref{ass-model-studied inour work} has an $\omega-$periodic solution.
\end{teorema}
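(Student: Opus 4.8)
The plan is to realise the $\omega$-periodic solution as a fixed point of the period (Poincar\'{e}) map of \eqref{ass-model-studied inour work}, combining a dissipativity estimate with a fixed-point theorem for asymptotically compact maps. By item~(2) every coefficient of \eqref{ass-model-studied inour work} is continuous and $\omega$-periodic, hence bounded on $[0,+\infty)$; since $F_i=G_i+H_i$ is globally Lipschitz by item~(1) (consistently with H6), the right-hand side of \eqref{ass-model-studied inour work} is a continuous functional on $[0,+\infty)\times UC_g$ that is $\omega$-periodic in $t$, so solutions with bounded initial data exist, are unique, and — by the Gronwall argument of Lemma~\ref{lemma_1} in the weighted norm — are defined on $\er$. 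Writing $T(t)\phi=x_t(\cdot;\phi)$ and $P:=T(\omega)$, any fixed point $\phi^\ast$ of $P$ yields an $\omega$-periodic solution $x(\cdot;\phi^\ast)$ by periodicity of the system, so everything reduces to producing such a fixed point.

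First I would establish dissipativity: there is $\rho>0$ such that every solution with bounded initial data eventually enters and stays in the ball $\mathcal{B}=\{\phi\in UC_g:\|\phi\|_g\le\rho\}$. This is where item~(4) enters. Running the primitive-of-$1/a_i$ functional $z_i(t)=\mathrm{sign}(x_i(t))\,d_i^{-1}\int_0^{x_i(t)}a_i(t,v)^{-1}\,dv$ exactly as in the proof of Lemma~\ref{teo:bounded-solutions}, hypotheses H2, H5, H6, item~(3) and the strict \emph{pointwise} inequality \eqref{4.3b} (which on the compact period $[0,\omega]$ yields a uniform negative gap $-\alpha<0$) give, at any instant $t_k$ where $\|z_{t_k}\|$ is attained, a scalar inequality $z_i'(t_k)\le-\alpha\,z_i(t_k)+M$ with $\alpha>0$ and $M$ depending only on the bounded data; a comparison argument then produces the absorbing ball $\mathcal{B}$, and one checks that $P$ maps some bounded, closed, convex set built from $\mathcal{B}$ and finitely many of its $P$-iterates into itself.

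The main step is compactness: I must show that $P$, or some iterate $P^m$, sends bounded sets to precompact subsets of $(UC_g,\|\cdot\|_g)$. On a bounded set the solutions are uniformly bounded on $[-\omega,\omega]$ and, by \eqref{ass-model-studied inour work}, H2 and the Lipschitz/continuity bounds on $G_i,H_i,h_{ijlp},f_{ijlq},g_{ijp},\til g_{ilp}$, so are their derivatives; hence $\{x(\cdot;\phi):\|\phi\|_g\le\rho\}$ is equi-Lipschitz on $[-\omega,\omega]$. For $s\to-\infty$ the tail $s\mapsto x(\omega+s;\phi)/g(s)$ is uniformly small since $g(s)\to+\infty$ while $|x(\omega+s;\phi)|$ stays bounded, and equicontinuity of $s\mapsto x(\omega+s;\phi)/g(s)$ on $(-\infty,0]$ follows from (g1)--(g2) together with the equi-Lipschitz bound; an Arzel\`{a}--Ascoli argument adapted to the weighted phase space then gives precompactness of $P(\mathcal{B})$. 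The infinite distributed-delay terms $\int_{-\infty}^0 g_{ijq}(\varphi_j(s))\,d\eta_{ijq}(s)$ pass to the limit by dominated convergence, using that each $\eta_{ijq}$ has finite total variation and $g_{ijq}$ is bounded on the bounded range of the $\varphi_j$. With dissipativity and (eventual) compactness in hand, Horn's fixed-point theorem for maps compact on a nested family of convex sets yields a fixed point $\phi^\ast$ of $P$, and $x(\cdot;\phi^\ast)$ is the desired $\omega$-periodic solution.

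I expect this compactness step to be the real obstacle: because the delay is unbounded one cannot use the classical fact that the solution map becomes compact once $t$ exceeds the delay, and must instead exploit the weight $g$ and conditions (g1)--(g3) to tame the infinite past. (A more hands-on alternative avoiding fixed-point theory: first show, along the lines of Theorem~\ref{convegrence of solutions theorem}, that under \eqref{4.3b} any two solutions of \eqref{ass-model-studied inour work} with bounded initial data satisfy $|x(t)-\hat x(t)|\to0$; then, given a bounded solution $x$, the translate $w:=x(\cdot+\omega)$ is again a solution, so $|x(t)-x(t+\omega)|\to0$, and the precompact family of translates $x(\cdot+k\omega)$ has a subsequence converging on compact intervals to a solution $\tilde x$ with $\tilde x(\cdot+\omega)\equiv\tilde x$ — but the same weighted-phase-space precompactness is again the crux.)
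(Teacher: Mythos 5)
First, a point of comparison that matters: this paper does not prove the statement at all. Theorem~\ref{periodic-soln-exist} is imported verbatim from the authors' companion paper \cite{submitted} (its Theorem~5.2), so there is no internal proof to measure your argument against; your proposal has to stand on its own.

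On its own merits, the Poincar\'e-map-plus-Horn plan breaks down at exactly the step you flag as the crux, and not just for technical reasons. No iterate $P^m$ of the period map is compact on $({UC}_g,\|\cdot\|_g)$, nor does it map a ball of bounded initial data into a compact set: the segment $x_{m\omega}(\cdot;\phi)$ coincides on $(-\infty,-m\omega]$ with a translate of the raw initial function $\phi$, about which you know only a uniform bound and no equicontinuity, so $P^m(\mathcal{B})$ contains infinitely many elements at mutual $\|\cdot\|_g$-distance bounded below by a positive constant of order $1/g(-m\omega)$ and is not totally bounded for any fixed $m$. Hence Horn's theorem, which requires the iterates to enter genuinely compact convex sets, does not apply as invoked; to save the scheme you would need an asymptotic fixed point theorem for condensing maps, exploiting that the measure of noncompactness of $P^m(\mathcal{B})$ is at most a constant times $1/g(-m\omega)\to0$. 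A second, smaller gap: $P$ is only well defined if solutions are unique, and H2 gives $a_i(t,\cdot)$ merely continuous and bounded, not Lipschitz, so ``exist, are unique'' needs justification or an extra hypothesis. The cleaner route, and the one consistent with how condition \eqref{4.3b} is actually used (its strictness on the compact interval $[0,\omega]$ yields a uniform negative gap), is Massera-style and close to the alternative you sketch in parentheses, except that no phase-space precompactness is needed there: upgrade the convergence argument of Theorem~\ref{convegrence of solutions theorem} to a uniform (exponential-type) estimate between any two solutions of \eqref{ass-model-studied inour work} with bounded initial data, deduce that the segments $x_{k\omega}$ form a Cauchy sequence in $BC$ (mere $|x(t)-x(t+\omega)|\to0$ is indeed too weak, as you suspect), and let the limit initial function generate the $\omega$-periodic solution by periodicity of the system.
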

From Theorems \ref{convegrence of solutions theorem} and \ref{periodic-soln-exist}, we obtain the following result:
\begin{corolario}\label{cor 4.4}
    Assume H3. and the hypotheses in Theorem \ref{periodic-soln-exist}. If \eqref{asymptotic-system} holds and there is $d=(d_1,\ldots,d_n)>0$ such that
   \begin{align}\label{cor:cond-est-modelo-AJC}
       \dst-(\hat\beta_i(t)+A_i(t))\lefteqn{\un{a}_i+\sum_{j,l=1}^{n}\left(\sum_{p=1}^{P}\zeta_i|\hat c_{ijlp}(t)|\left(\gamma_{ijlp}^{(1)}\ov{a}_j\frac{d_j}{d_i}+\gamma_{ijlp}^{(2)}\ov{a}_l\frac{d_l}{d_i}\right)\right.}\nonumber\\
	& +\left.\sum_{q=1}^{Q}\varsigma_i|\hat d_{ijlp}(t)|\left(\mu_{ijlp}^{(1)}\xi_{ijp}\ov{a}_j\dst\frac{d_j}{d_i}+\mu_{ijlp}^{(2)}\til{\xi}_{ilp}\ov{a}_l\frac{d_l}{d_i}\right)\right)<0,\, i=1,\ldots,n,\, t\in[0,\omega],
   \end{align}

  then every solution of the system \eqref{model-studied in our work} with bounded initial condition, $x(t)$, satisfies 
   $$
  \lim\limits_{t \to +\infty}|x(t)-\hat{x}(t)|=0,
 $$
 where $\hat{x}(t)$ is the $\omega-$periodic solution of \eqref{ass-model-studied inour work}.
\end{corolario}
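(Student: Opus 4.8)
The plan is to obtain Corollary~\ref{cor 4.4} by combining Theorem~\ref{convegrence of solutions theorem} with Theorem~\ref{periodic-soln-exist}: the former gives $|x(t)-\hat x(t)|\to0$ towards \emph{any} bounded-initial-condition solution $\hat x$ of the asymptotic system, while the latter produces an $\omega$-periodic solution $\hat x$. Hence the real work is to check that \eqref{model-studied in our work} and \eqref{ass-model-studied inour work} fulfil the hypotheses H1.--H7. required by Theorem~\ref{convegrence of solutions theorem}.

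I would start by recording that, with $F_i(u,v)=G_i(u)+H_i(v)$ and (when $P\neq Q$) after padding the discrete and distributed sums with null coefficients so that a single index range is used, \eqref{model-studied in our work} is a particular case of \eqref{1} and \eqref{ass-model-studied inour work} is the corresponding instance of \eqref{def-asymp-system}; since \eqref{asymptotic-system} is assumed, \eqref{ass-model-studied inour work} is an asymptotic system of \eqref{model-studied in our work} in the sense of Definition~\ref{def:asymptotic-system}. Then I would verify H1.--H7.. Hypotheses H2., H5. and the Lipschitz bounds on $g_{ijp},\til g_{ilp}$ in H6. belong to the assumptions of Theorem~\ref{periodic-soln-exist}, the joint Lipschitz bound on $F_i$ in H6. following from the Lipschitz bounds on $G_i,H_i$ together with $F_i=G_i+H_i$, and H3. is assumed directly in the corollary. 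For H1., the periodicity assumptions of Theorem~\ref{periodic-soln-exist} make $\hat c_{ijlp},\hat d_{ijlp},\hat I_i$ and $t\mapsto\hat b_i(t,u)$ continuous and $\omega$-periodic, hence bounded; \eqref{asymptotic-system}---used with the constant function $w\equiv u$ for the $b_i$ term---then forces $c_{ijlp},d_{ijlp},I_i$ and $t\mapsto b_i(t,u)$ to differ from bounded functions by functions tending to $0$ at $+\infty$, so they are bounded. For H4., $\hat\tau_{ijp}$ and $\hat{\til\tau}_{ilp}$ are bounded (being $\omega$-periodic), so $t-\hat\tau_{ijp}(t)\to+\infty$ and $t-\hat{\til\tau}_{ilp}(t)\to+\infty$, and \eqref{asymptotic-system} transfers these limits to $\tau_{ijp}$ and $\til\tau_{ilp}$.

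The decisive point is H7.. By the reformulation recorded just after Definition~\ref{def:asymptotic-system}, H7. is equivalent to the same $\limsup$ inequality written with $\hat\beta_i,\hat c_{ijlp},\hat d_{ijlp}$ in place of $\beta_i,c_{ijlp},d_{ijlp}$. Since $\hat\beta_i,\hat c_{ijlp},\hat d_{ijlp}$ are $\omega$-periodic and, as $a_i(t,\cdot)$ is $\omega$-periodic in $t$, one may take $A_i$ to be $\omega$-periodic (for instance the maximal admissible choice $A_i(t)=\inf_{v\in\er}\partial_t a_i(t,v)/a_i^2(t,v)$, which is finite by H2. and preserves both H2. and \eqref{cor:cond-est-modelo-AJC}), the bracketed expression in that reformulation is $\omega$-periodic; its $\limsup$ at $+\infty$ is thus its maximum over one period, which is strictly negative by \eqref{cor:cond-est-modelo-AJC}. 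Hence H7. holds, Theorem~\ref{convegrence of solutions theorem} applies, and $\lim_{t\to+\infty}|x(t)-\hat x(t)|=0$ for every solution $x$ of \eqref{model-studied in our work} and every bounded-initial-condition solution $\hat x$ of \eqref{ass-model-studied inour work}. Finally, the hypotheses of the corollary include those of Theorem~\ref{periodic-soln-exist}, which yields an $\omega$-periodic solution $\hat x$ of \eqref{ass-model-studied inour work}; being continuous and periodic, $\hat x$ restricts to a bounded function on $(-\infty,0]$, so it has bounded initial condition and the limit above holds for it, establishing the corollary.

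I expect the main obstacle to be precisely the verification of H7.: one must legitimately pass from the ``$t\in[0,\omega]$'' inequality \eqref{cor:cond-est-modelo-AJC} to the ``$\limsup$ at $+\infty$'' version of H7., which relies on both the asymptotic identifications \eqref{asymptotic-system} and the $\omega$-periodicity of the hatted data (and of $A_i$); everything else is bookkeeping plus a direct invocation of the two quoted theorems.
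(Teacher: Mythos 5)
Your proposal is correct and takes essentially the same route as the paper: Corollary \ref{cor 4.4} is presented there as a direct consequence of combining Theorem \ref{convegrence of solutions theorem} with Theorem \ref{periodic-soln-exist}, which is exactly your argument. Your verification of H1.--H7. (in particular passing from \eqref{cor:cond-est-modelo-AJC} on $[0,\omega]$ to the $\limsup$ condition H7. via the $\omega$-periodicity of the hatted data, a periodic choice of $A_i$, and the equivalence noted after Definition \ref{def:asymptotic-system}) simply fills in details the paper leaves implicit.
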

\begin{rem}
We note that the previous result establishes sufficient conditions for all solutions of \eqref{model-studied in our work}, which is not necessarily a periodic model, to converge to a periodic function as time goes to infinity. In the case of
 the Cohen--Grossberg neural network model \eqref{model-studied in our work} is not a periodic system, then the criterion in \cite[Theorem 5.2]{submitted} cannot be applied to prove the existence of a periodic solution of \eqref{model-studied in our work}. 
\end{rem}

As a particular example of model \eqref{model-studied in our work}, we have the following high-order Cohen--Grossberg neural network model,
\begin{eqnarray}\label{eq:H-O-Cohen-Grossberg-model}
	x_i'(t)\lefteqn{=a_i(x_i(t))\bigg[-b_i(x_i(t))+\sum_{j=1}^nc_{ij}(t)f_j(\rho_jx_j(t))+\sum_{j=1}^nd_{ij11}(t)f_{j}\left(\rho_j\int_0^{+\infty}K_{ij}(u)x_j(t-u)du\right)}\nonumber\\
	&+\left.\dst\sum_{j,l=1}^nd_{ijl2}(t)f_j\left(\rho_j\int_0^{+\infty}K_{ij}(u)x_j(t-u)du\right)f_l\left(\rho_l\int_0^{+\infty}K_{il}(u)x_l(t-u)du\right)+I_i(t)\right],\nonumber\\
\end{eqnarray}
for $\,t\geq0,\,i=1,\ldots,n,$ where, for each $i,j,l=1,\ldots,n$ and $q=1,2$, $\rho_i>0$, $a_i:\er\to(0,+\infty)$, $b_i:\er\to\er$, $c_{ij},d_{ijlq},I_i:[0,+\infty)\to\er$, $f_{j}:\er\to\er$, and $K_{ij}:[0,+\infty)\to[0,+\infty)$ are continuous functions such that $K_{ij}$ verifies \eqref{ex:kernel1}.
The existence and global exponential stability of a periodic solution of \eqref{eq:H-O-Cohen-Grossberg-model} were studied in \cite{liu2011periodic} (see also  \cite{submitted}).

Considering $\hat{c}_{ij},\hat{d}_{ijlq}:[0,+\infty)\to\er$ continuous functions such that
\begin{eqnarray}\label{ass cond for H-O}
		\lim\limits_{t\to+\infty} \big(c_{ij}(t)-\hat c_{ij}(t)\big)=\lim\limits_{t\to+\infty} \big(d_{ijlq}(t)-\hat d_{ijlq}(t)\big)=\lim\limits_{t\to+\infty} \big(I_i(t)-\hat I_{i}(t)\big)=0,
\end{eqnarray}
for each $i,j,l=1,\ldots,n$ and $q=1,2$, then \eqref{eq:H-O-Cohen-Grossberg-model} is an asymptotic system of

\begin{eqnarray}\label{eq:H-O-Cohen-Grossberg-model ass}
	x_i'(t)\lefteqn{=a_i(x_i(t))\bigg[- b_i(x_i(t))+\sum_{j=1}^n\hat c_{ij}(t)f_j(\rho_jx_j(t))+\sum_{j=1}^n\hat d_{ij11}(t)f_{j}\left(\rho_j\int_0^{+\infty}K_{ij}(u)x_j(t-u)du\right)}\nonumber\\
	&+\left.\dst\sum_{j,l=1}^n\hat d_{ijl2}(t)f_j\left(\rho_j\int_0^{+\infty}K_{ij}(u)x_j(t-u)du\right)f_l\left(\rho_l\int_0^{+\infty}K_{il}(u)x_l(t-u)du\right)+\hat I_i(t)\right].\nonumber\\
\end{eqnarray}
 Consequently, from Corollary \ref{cor 4.4}, we obtain the following result.

 \begin{corolario}\label{periodic soln exist for HO}
 Assume the following hypotheses. 
 
 For each $i=1,\dots,n$, and $q=1,2$:
 \begin{description}
 	\item[A1.] the functions $c_{ij}$, $d_{ijlq}$, and $I_{i}$ are $\omega-$periodic for some $\omega>0$;
     \item[A2.] there are $\un a_i, \ov a_i>0$ such that
            $$\un a_i<a_i(u)< \ov a_i;$$
     \item[A3.] there are $\beta_i, \beta_i^*>0$ such that                                            \begin{align*}
		\beta_i\leq\frac{b_i(u)-b_i(v)}{u-v}\leq \beta_i^*, \Es \forall u,v\in \er,\, u\neq v;
	\end{align*}
    \item[A4.] 	there exist $M_i,\mu_i>0$ such that
     $$
	  |f_i(u)-f_i(v)|\leq\mu_i|u-v|\Es\text{and}\Es |f_{i}(u)|\leq M_i,\Es\forall u,v\in\er.
	 $$
 \end{description}
 If there exists $d=(d_1,\ldots,d_n)>0$ such that, for all 	$t\in[0,\omega]$ and $i=1,\ldots,n$,	
	\begin{eqnarray}\label{4.3f}
		\beta_i>\sum_{j=1}^{n}\left[\left(|c_{ij}(t)|+|d_{ij11}(t)|\right)\mu_{j}\rho_j\frac{d_j}{d_i}
		+\sum_{l=1}^n|d_{ijl2}(t)|\left(\frac{d_j}{d_i}M_l\mu_{j}\rho_j+\frac{d_l}{d_i}M_j\mu_{l}\rho_l\right)\right],
	\end{eqnarray}
	then the model \eqref{eq:H-O-Cohen-Grossberg-model} has an $\omega-$periodic solution.
	
		If, in additional to  condition \eqref{4.3f}, there exists $d^*=(d^*_1,\ldots,d^*_n)>0$ such that, for all 	$t\in[0,\omega]$ and $i=1,\ldots,n$,
    \begin{eqnarray}\label{cor-cond-est-model-LX}
\un{a}_i\beta_i>\sum_{j=1}^{n}\bigg[\left(|c_{ij}(t)|+|d_{ij11}(t)|\right)\mu_{j}\rho_j\ov{a}_j\frac{d^*_j}{d^*_i}+\sum_{l=1}^n|d_{ijl2}(t)|\big(\frac{d^*_j}{d^*_i}\ov{a}_jM_l\mu_{j}\rho_j+\frac{d^*_l}{d^*_i}\ov{a}_lM_j\mu_{l}\rho_l\big)\bigg],
	\end{eqnarray}
 then every solution of the system \eqref{eq:H-O-Cohen-Grossberg-model ass} with bounded initial condition, $\hat{x}(t)$, satisfies
  $$
  \lim\limits_{t \to +\infty}|\hat{x}(t)-\tilde{x}(t)|=0,
 $$
 where $\tilde{x}$ is the $\omega-$periodic solution of \eqref{eq:H-O-Cohen-Grossberg-model}.
\end{corolario}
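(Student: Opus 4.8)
The plan is to exhibit \eqref{eq:H-O-Cohen-Grossberg-model} and \eqref{eq:H-O-Cohen-Grossberg-model ass} as particular cases of \eqref{model-studied in our work} and \eqref{ass-model-studied inour work} respectively, so that the first assertion becomes an instance of Theorem \ref{periodic-soln-exist} applied to \eqref{eq:H-O-Cohen-Grossberg-model} and the second an instance of Corollary \ref{cor 4.4}.

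First I would fix the identification of the data: take $G_i=H_i=\mathrm{id}$ (so that the Lipschitz constants of $G_i,H_i$ required in Theorem \ref{periodic-soln-exist} are $\zeta_i=\varsigma_i=1$); $a_i(t,u)\equiv a_i(u)$ (so that $A_i\equiv0$ is admissible in H2.); $P=1$ and $Q=2$; $g_{ijq}(u)=\til g_{ilq}(u)=u$ (so that $\xi_{ijq}=\til\xi_{ilq}=1$); and $\eta_{ijq}(s)=\int_{-\infty}^{s}K_{ij}(-v)\,dv$, which by \eqref{ex:kernel1} is non-decreasing with $\eta_{ijq}(0)-\eta_{ijq}(-\infty)=1$. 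The non-delayed sum $\sum_jc_{ij}(t)f_j(\rho_jx_j(t))$ is placed in the discrete part of \eqref{model-studied in our work} with $\tau_{ij1}\equiv\til\tau_{il1}\equiv0$ and $h_{ijl1}$ built from $f_j(\rho_j\cdot)$; the order-one distributed sum is recovered with $f_{ijl1}$ built from $f_j(\rho_j\cdot)$ and weights $d_{ij11}(t)$; and the high-order sum with $f_{ijl2}(u_1,u_2)=f_j(\rho_ju_1)f_l(\rho_lu_2)$ and weights $d_{ijl2}(t)$. With this dictionary, H2.\ follows from A2., the periodicity requirement of Theorem \ref{periodic-soln-exist} follows from A1.\ together with the autonomy of $a_i$ and $b_i$, and its monotonicity requirement from A3.\ with the constant functions $\beta_i,\beta_i^{\ast}$; hypotheses H5.\ and H6.\ follow from A4., the relevant Lipschitz constants being $\mu_j\rho_j$ for the linear pieces and, via the bound $|f_i|\le M_i$, $\mu^{(1)}_{ijl2}=M_l\mu_j\rho_j$, $\mu^{(2)}_{ijl2}=M_j\mu_l\rho_l$ for the quadratic ones. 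Feeding these constants together with $\zeta_i=\varsigma_i=\xi_{ijq}=\til\xi_{ilq}=1$ into \eqref{4.3b} reproduces \eqref{4.3f}, so Theorem \ref{periodic-soln-exist} provides the $\omega$-periodic solution $\tilde x$ of \eqref{eq:H-O-Cohen-Grossberg-model}; this proves the first assertion.

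For the convergence statement, the same dictionary realises \eqref{eq:H-O-Cohen-Grossberg-model ass} as an instance of \eqref{model-studied in our work} whose asymptotic system is \eqref{eq:H-O-Cohen-Grossberg-model}. Hypothesis H3.\ for \eqref{eq:H-O-Cohen-Grossberg-model ass} holds because it carries the same $b_i$; H1.\ and H4.\ are immediate, since the $\omega$-periodic continuous coefficients are bounded and the discrete delays vanish; the hypotheses of Theorem \ref{periodic-soln-exist} for \eqref{eq:H-O-Cohen-Grossberg-model} were verified above; and \eqref{ass cond for H-O} is exactly the non-trivial part of \eqref{asymptotic-system}, the remaining limits (those involving $\beta_i$, $b_i$, and the delays $\tau_{ijp},\til\tau_{ilp}$) vanishing identically because these objects are common to the two systems. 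Finally, the same substitution of Lipschitz constants converts condition \eqref{cor:cond-est-modelo-AJC} of Corollary \ref{cor 4.4}, with its weight vector now called $d^{\ast}$, into \eqref{cor-cond-est-model-LX}. Hence all the hypotheses of Corollary \ref{cor 4.4} hold and it gives $\lim_{t\to+\infty}|\hat x(t)-\tilde x(t)|=0$ for every solution $\hat x(t)$ of \eqref{eq:H-O-Cohen-Grossberg-model ass} with bounded initial condition.

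The one genuinely delicate step is the bookkeeping of the second paragraph: one must split the three structurally different terms on the right-hand side of \eqref{eq:H-O-Cohen-Grossberg-model} between the single ``$G_i$-discrete'' slot ($P=1$) and the two ``$H_i$-distributed'' slots ($q=1,2$), note that the products $f_j(\rho_j\cdot)f_l(\rho_l\cdot)$ are globally Lipschitz precisely because each $f_i$ is bounded (which is where A4.'s bound $|f_i|\le M_i$ enters and produces the constants $M_l\mu_j\rho_j$, $M_j\mu_l\rho_l$ appearing in \eqref{4.3f} and \eqref{cor-cond-est-model-LX}), and, since the $h$-function attached to the non-delayed term effectively depends on a single variable, keep its redundant Lipschitz constant small enough that the strict inequalities \eqref{4.3b} and \eqref{cor:cond-est-modelo-AJC} are preserved. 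Everything else is a direct appeal to Theorem \ref{periodic-soln-exist} and Corollary \ref{cor 4.4}.
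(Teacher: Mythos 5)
Your proposal is correct and follows essentially the same route as the paper: cast \eqref{eq:H-O-Cohen-Grossberg-model} and \eqref{eq:H-O-Cohen-Grossberg-model ass} as instances of \eqref{model-studied in our work}/\eqref{ass-model-studied inour work} with $P=1$, $Q=2$, $G_i=H_i=\mathrm{id}$, zero discrete delays, identity $g$'s and $\eta_{ijq}$ built from $K_{ij}$, derive the Lipschitz constants $\rho_j\mu_j$, $M_l\rho_j\mu_j$, $M_j\rho_l\mu_l$ from A4., and then invoke Theorem \ref{periodic-soln-exist} and Corollary \ref{cor 4.4}. You even make explicit the small point (choosing the redundant Lipschitz constants of the single-variable $h$- and $f$-slots small enough so the strict inequalities persist) that the paper passes over silently.
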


\begin{proof}
	In the model \eqref{model-studied in our work}, taking $P=1$, $Q=2$ and, for each $i,j,l=1,\ldots,n$, $q=1,2$, $u,u_1,u_2\in\er$, and $t\geq0$, let $a_i(t,u)=a_i(u)$, $b_i(t,u)=b_i(u)$,  $F_i(u)=H_i(u)=u$, $\tau_{ij1}(t)=\til{\tau}_{ij1}(t)=0$, $h_{ijl1}(u_1,u_2)=f_j(\rho_ju_1)$, $c_{ij11}(t)=c_{ij}(t)$, $c_{ijl1}(t)=d_{ijl1}(t)=0$ for $l\neq1$,  $f_{ijl1}(u_1,u_2)=f_j(\rho_ju_1)$, $f_{ijl2}(u_1,u_2)=f_j(\rho_ju_1)f_l(\rho_lu_2)$, $g_{ijq}(u)=\til{g}_{ijq}(u)=u$, and $\til{\eta}_{ijq}(s)=\eta_{ijq}(s)=\int_{-\infty}^0K_{ij}(-v)dv$, for $s\leq0$, we obtain model \eqref{eq:H-O-Cohen-Grossberg-model}.

	Trivially hypotheses H6. and $i.$ in Theorem \ref{periodic-soln-exist} hold. From A1., A2., and A3.,  hypotheses $ii.$ and $iii.$ in Theorem \ref{periodic-soln-exist} also hold.
	
	For all $u_1,u_2,v_1,v_2\in\er$, we have
	$$
	|h_{ijl1}(u_1,u_2)-h_{ijl1}(v_1,v_2)|=|f_j(\rho_ju_1)-f_j(\rho_jv_1)|\leq\rho_j\mu_j|u_1-v_1|,
	$$
	$$
	|f_{ijl1}(u_1,u_2)-f_{ijl1}(v_1,v_2)|=|f_j(\rho_ju_1)-f_j(\rho_jv_1)|\leq\rho_j\mu_j|u_1-v_1|,
	$$
	and
	\begin{align*}
		|f_{ijl2}(u_1,u_2)-f_{ijl2}(v_1,v_2)|&=|f_j(\rho_ju_1)f_l(\rho_lu_2)-f_j(\rho_jv_1)f_l(\rho_lv_2)|\\
		&\leq |f_j(\rho_ju_1)-f_j(\rho_jv_1)||f_l(\rho_lu_2)|+|f_j(\rho_jv_1)||f_l(\rho_lu_2)-f_l(\rho_lv_2)|
		\\
		&\leq M_l\rho_j\mu_j|u_1-v_1|+M_j\rho_l\mu_l|u_2-v_2|,
	\end{align*}
	for all $i,j,l=1,\ldots,n$, thus hypothesis H5. holds. Condition \eqref{4.3b} follows from \eqref{4.3f}, and the inequality \eqref{cor:cond-est-modelo-AJC} reads as \eqref{cor-cond-est-model-LX}. Finally, the result follows from Corollary \ref{cor 4.4}.
\end{proof}

\begin{rem} 
	We emphasize that since \eqref{eq:H-O-Cohen-Grossberg-model} is an asymptotic system of itself, its periodic solution, $\tilde{x}(t)$, serves as a global attractor for  all solutions with bounded initial conditions. The global exponential stability of  $\tilde{x}(t)$ is proved in \cite[Corrolary 5.9]{submitted}, given the additional condition $\int_0^{+\infty}K_{ij}(u)\Ne^{\alpha u}du<+\infty$, for some $\alpha>0$.
\end{rem}

\exemplo Consider the following low-order Cohen--Grossberg neural network model
\begin{eqnarray}\label{eq:L-O-Cohen-Grossberg-model}
	x_i'(t)=a_i(x_i(t))\bigg[-\lefteqn{ b_i(x_i(t))+\sum_{j=1}^n c_{ij}(t)f_j(x_j(t))+\sum_{j=1}^n d_{ij}(t)g_{j}\left(x_j(t-\tau_{ij}(t))\right)}\nonumber\\
	&+\left.\dst\sum_{j=1}^n p_{ij}(t)\int_0^{+\infty}K_{ij}(u)h_j(x_j(t-u))du+I_i(t)\right],
\end{eqnarray}
for $\,t\geq0,\,i=1,\ldots,n,$ where, for each $i,j=1,\ldots,n$, $a_i:\er\to(0,+\infty)$, $b_i:\er\to\er$, $c_{ij},d_{ij},p_{ij},I_i:[0,+\infty)\to\er$, $\tau_{ij},K_{ij}:[0,+\infty)\to[0,+\infty)$, and $f_{j},g_j,h_j:\er\to\er$ are continuous functions.

The existence and global exponential stability of an almost periodic solution of \eqref{eq:L-O-Cohen-Grossberg-model} is studied in \cite{xiang+cao}.

For model \eqref{eq:L-O-Cohen-Grossberg-model}, the following hypotheses will be considered:
\begin{description}
	\item[B1.] There is $\mu>0$ such that for each $i,j=1,\ldots,n$,
	$$
	  \int_0^{+\infty}K_{ij}(t)dt=1\Es\text{and}\Es\int_0^{+\infty}\e^{\mu t}K_{ij}(t)dt<+\infty;
	$$
	\item[B2.] For each $i=1,\ldots,n$,  there are $\un a_i, \ov a_i>0$ such that
	$$\un a_i<a_i(u)< \ov a_i,\Es \forall u\in \er;$$
	\item[B3.] For each $i=1,\ldots,n$, there is $\beta_i>0$ such that                              
	\begin{align*}
		\beta_i\leq\frac{b_i(u)-b_i(v)}{u-v}, \Es u,v\in \er,\, u\neq v;
	\end{align*}
    \item[B4.] For each $i,j=1,\ldots,n$, the function $\tau_{ij}$ is almost periodic;
	\item[B5.] For each $i=1,\ldots,n$, $f_i(0)=g_i(0)=h_i(0)=0$ and there exist $L_i^f,L_i^g,L_i^h>0$ such that
	$$
	|f_i(u)-f_i(v)|\leq L_i^f|u-v|,\,\,|g_i(u)-g_i(v)|\leq L_i^g|u-v|,\,\,\text{and}\,\, |h_i(u)-h_i(v)|\leq L_i^h|u-v|,
	$$
	for all $u,v\in\er$.
\end{description}

\begin{teorema}\label{teo-almost-periodic-model}
	Assume B1.-B5..
	
	If there are $\tilde{c}_{ij},\tilde{d}_{ij},\tilde{p}_{ij}, \tilde{I}_i:\er\to\er$ continuous and almost periodic functions on $\er$ such that: 
	\begin{enumerate}
		\item $\dst\lim_{t\to+\infty}(c_{ij}(t)-\tilde{c}_{ij}(t))=\lim_{t\to+\infty}(d_{ij}(t)-\tilde{d}_{ij}(t))=\lim_{t\to+\infty}(p_{ij}(t)-\tilde{p}_{ij}(t))\\=\lim_{t\to+\infty}(I_{i}(t)-\tilde{I}_{i}(t))=0$;
		\item  there exists $d=(d_1,\ldots,d_n)>0$ verifying
	$$
	   d_i\un{a}_i\beta_i>\sum_{j=1}^n\ov{a}_jd_j\left(\tilde{c}_{ij}^+L_j^f+\tilde{d}_{ij}^+L_j^g+\tilde{p}_{ij}^+L_j^h\right),\Es i=1,\ldots,n,
	$$  
	where $\tilde{c}_{ij}^+=\dst\sup_{t\in\er}|\tilde{c}_{ij}(t)|$, $\tilde{d}_{ij}^+=\dst\sup_{t\in\er}|\tilde{d}_{ij}(t)|$, and $\tilde{p}_{ij}^+=\dst\sup_{t\in\er}|\tilde{p}_{ij}(t)|$,
\end{enumerate}
     then there exists an almost periodic function $\tilde{x}:\er\to\er$ such that
	$$
	  \lim_{t\to+\infty}|x(t)-\tilde{x}(t)|=0,
	$$
	for all solution $x(t)$ of model \eqref{eq:L-O-Cohen-Grossberg-model} with bounded initial condition.
\end{teorema}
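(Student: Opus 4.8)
The plan is to derive Theorem~\ref{teo-almost-periodic-model} from Theorem~\ref{convegrence of solutions theorem}: the idea is to regard \eqref{eq:L-O-Cohen-Grossberg-model} as a particular case of \eqref{1} whose ``limiting'' counterpart---the model obtained by replacing $c_{ij},d_{ij},p_{ij},I_i$ with $\tilde{c}_{ij},\tilde{d}_{ij},\tilde{p}_{ij},\tilde{I}_i$---is at once an asymptotic system of \eqref{eq:L-O-Cohen-Grossberg-model} in the sense of Definition~\ref{def:asymptotic-system} and an almost periodic Cohen--Grossberg model possessing an almost periodic solution $\tilde{x}$. Once that is in place, Theorem~\ref{convegrence of solutions theorem} yields directly that every solution of \eqref{eq:L-O-Cohen-Grossberg-model} with bounded initial condition converges to $\tilde{x}$.

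First I would exhibit \eqref{eq:L-O-Cohen-Grossberg-model} as an instance of \eqref{1} by taking $P=2$, $a_i(t,u)=a_i(u)$ (so $A_i\equiv 0$ fits H2., since $a_i$ is independent of $t$), $b_i(t,u)=b_i(u)$, $F_i(u,v)=u+v$ (whence one may take $\zeta_i=\varsigma_i=1$ in H6.), $\eta_{ijp}(s)=\int_{-\infty}^{s}K_{ij}(-v)\,dv$ and $\til\eta_{ilp}$ defined likewise (these are non-decreasing of total mass $1$ by B1.); for the discrete terms, $\tau_{ij1}\equiv 0$, $\tau_{ij2}=\tau_{ij}$, $\til\tau_{ilp}\equiv 0$, $h_{ij11}(u_1,u_2)=f_j(u_1)$, $h_{ij12}(u_1,u_2)=g_j(u_1)$, $c_{ij11}=c_{ij}$, $c_{ij12}=d_{ij}$; for the distributed term, $f_{ij11}(u_1,u_2)=u_1$, $g_{ij1}=h_j$, $d_{ij11}=p_{ij}$; and all remaining coefficients $c_{ijlp},d_{ijlp}$ (in particular those with $l\neq 1$, and $d_{ijl2}$) equal to $0$, the corresponding $h_{ijlp},f_{ijlp},\til g_{ilp}$ chosen trivially. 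With these data the terms $\mathcal U_i(t,x_t)$ and $\mathcal V_i(t,x_t)$ of \eqref{def-U} become, respectively, the discrete and distributed sums appearing in \eqref{eq:L-O-Cohen-Grossberg-model}. Keeping the same structural data and only replacing $c_{ij},d_{ij},p_{ij},I_i$ by $\tilde{c}_{ij},\tilde{d}_{ij},\tilde{p}_{ij},\tilde{I}_i$, and setting $\hat b_i=b_i$, $\hat\tau_{ijp}=\tau_{ijp}$, one obtains a model of the form \eqref{def-asymp-system}.

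Next I would check that this limiting model is an asymptotic system of \eqref{eq:L-O-Cohen-Grossberg-model} and that H1.--H7. hold. Condition \eqref{hyp-H3^} holds with $\hat\beta_i\equiv\beta_i$ by B3., and \eqref{asymptotic-system} reduces to $\beta_i-\hat\beta_i\equiv b_i-\hat b_i\equiv\tau_{ijp}-\hat\tau_{ijp}\equiv 0$ together with $c_{ij}-\tilde{c}_{ij}$, $d_{ij}-\tilde{d}_{ij}$, $p_{ij}-\tilde{p}_{ij}$, $I_i-\tilde{I}_i\to 0$, which is hypothesis~(1). As for the rest: H1. holds because $\tilde{c}_{ij},\tilde{d}_{ij},\tilde{p}_{ij},\tilde{I}_i$, being almost periodic, are bounded, and $c_{ij},d_{ij},p_{ij},I_i$ differ from them by functions tending to $0$; H2. holds with $A_i\equiv 0$ and B2.; H3. with $\beta_i(t)\equiv\beta_i$; H4. because an almost periodic $\tau_{ij}$ is bounded, so $t-\tau_{ij}(t)\to+\infty$; and H5.--H6. come from the Lipschitz bounds in B5. (the constants $\gamma^{(2)}_{ijlp},\mu^{(2)}_{ijlp},\til\xi_{ilp}$ attached to the unused second slots may all be taken equal to some small $\varepsilon>0$). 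Finally, spelling out H7. with these data, multiplying by $d_i>0$, and using $\limsup_{t}|c_{ij}(t)|=\limsup_{t}|\tilde{c}_{ij}(t)|\le\tilde{c}_{ij}^{+}$ (and similarly for $\tilde{d}_{ij}^{+}$ and $\tilde{p}_{ij}^{+}$), one sees that the quantity inside the $\limsup$ is at most $-\un a_i\beta_i d_i+\sum_{j}\ov a_j d_j\big(\tilde{c}_{ij}^{+}L_j^{f}+\tilde{d}_{ij}^{+}L_j^{g}+\tilde{p}_{ij}^{+}L_j^{h}\big)+O(\varepsilon)$, which is negative for $\varepsilon$ small by hypothesis~(2); hence H7. holds.

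Consequently Theorem~\ref{convegrence of solutions theorem} applies and gives $|x(t)-\hat x(t)|\to 0$ for every solution $x(t)$ of \eqref{eq:L-O-Cohen-Grossberg-model} and every solution $\hat x(t)$ of the limiting model, both with bounded initial conditions. It remains only to produce one almost periodic solution $\tilde{x}$ of the limiting model: since $\tilde{x}$ is bounded, it is an admissible $\hat x$, and $|x(t)-\tilde{x}(t)|\to 0$ then follows. As the limiting model is of the type \eqref{eq:L-O-Cohen-Grossberg-model} with almost periodic coefficients satisfying B1.--B5. and the stability inequality~(2), the existence of such a $\tilde{x}$ can be invoked from \cite{xiang+cao}; alternatively, and self-containedly, \eqref{asymptotic-system} makes the limiting model an asymptotic system of itself, so Lemma~\ref{teo:bounded-solutions} and Corollary~\ref{cor:criterio-si-proprio} show that all its solutions with bounded initial data are bounded and globally attractive, and---the decay being governed uniformly by~(2)---uniformly asymptotically stable, whence the classical Amerio--Favard theory of almost periodic differential equations yields an almost periodic solution. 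I expect this last step, the construction of the almost periodic solution of the limiting model (together with matching (2) to the hypotheses that construction requires), to be the main obstacle; the convergence statement itself is then an immediate consequence of Theorem~\ref{convegrence of solutions theorem}.
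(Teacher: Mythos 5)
Your proposal is correct and follows essentially the same route as the paper: rewrite \eqref{eq:L-O-Cohen-Grossberg-model} as a particular case of \eqref{1}, recognize the tilded system as an asymptotic system via condition (i), verify H1.--H7. from B1.--B5. and condition (ii), take the almost periodic solution $\tilde{x}$ of the limiting system from Xiang--Cao (Theorem 1 of \cite{xiang+cao}), and conclude with Theorem \ref{convegrence of solutions theorem}. Your extra care with the embedding constants and the alternative Amerio--Favard remark are fine but not needed; the paper simply cites \cite{xiang+cao} for the existence step, exactly as your primary route does.
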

\begin{proof}
Clearly, system \eqref{eq:L-O-Cohen-Grossberg-model} is a particular situation of model \eqref{1}.

By condition i. and Definition \ref{def-asymp-system}, the system
\begin{eqnarray}\label{eq:L-O-Cohen-Grossberg-model2}
	x_i'(t)=a_i(x_i(t))\bigg[-\lefteqn{ b_i(x_i(t))+\sum_{j=1}^n \tilde{c}_{ij}(t)f_j(x_j(t))+\sum_{j=1}^n \tilde{d}_{ij}(t)g_{j}\left(x_j(t-\tau_{ij}(t))\right)}\nonumber\\
	&+\left.\dst\sum_{j=1}^n \tilde{p}_{ij}(t)\int_0^{+\infty}K_{ij}(u)h_j(x_j(t-u))du+\tilde{I}_i(t)\right],
\end{eqnarray}
is an asymptotic system of \eqref{eq:L-O-Cohen-Grossberg-model}.

From \cite[Theorem 1]{xiang+cao}, system \eqref{eq:L-O-Cohen-Grossberg-model2} has a unique almost periodic solution. Denote it by $\tilde{x}(t)$.

From B1.-B5., hypotheses H1.-H6. hold. From condition ii., we conclude that
$$
\limsup_{t\to+\infty}\left[-\un{a}_i\beta_i+\sum_{j=1}^n\ov{a}_j\frac{d_j}{d_i}\left(c_{ij}(t)L_j^f+d_{ij}(t)L_j^g+p_{ij}(t)L_j^h\right)\right]<0,
$$
and H7. holds. Now the conclusion follows from Theorem \ref{convegrence of solutions theorem}.
\end{proof}

\begin{rem}
	We note that the function $\tilde{x}(t)$ may not be a solution of the model  \eqref{eq:L-O-Cohen-Grossberg-model}. Moreover, the coefficient functions $c_{ij},d_{ij},p_{ij}$ and the inputs $I_i$ are not necessarily almost periodic functions, thus \cite[Theorem 2]{xiang+cao} can not be used to prove the global attractivity of \eqref{eq:L-O-Cohen-Grossberg-model}.
\end{rem}

\section{Numerical Example}
Here, we present a numerical example to illustrate the effectiveness of new results presented in this work.

{{
		 \exemplo Consider the following numerical example which is a particular situation of \eqref{1}.
   \begin{align}\label{non-periodic ex}	 
			x_1'(t)=&\left(\sin\big(x_1(t)\big)+2\right)\bigg[-(4+\Ne^{-t})x_1(t)\Ne^{\frac{\sin(x_1(t))}{1+x_1(t)^2}}\nonumber\\
   &+\left(\frac{1}{3}\cos(t)+\Ne^{-t}\right)\tanh\big(x_2(t-|\sin(t)|)\big)+\Ne^{\sin(t)}+\Ne^{-t}\bigg]\nonumber\\
   \\
			x_2'(t)=&\left(\cos\big(x_2(t)\big)+2\right)\bigg[-(5+\cos(t)+\Ne^{-t})x_2(t)\nonumber\\
  & +\left(\frac{2}{3}\sin(t)+\Ne^{-t}\right)\tanh\big(x_1(t-|\cos(t)|)\big)+\cos(t)+\Ne^{-t}\bigg]\nonumber
	\end{align}
Here $n=2$, $P=1$, $a_1(t,u)=\sin(u)+2$, $a_2(t,u)=\cos(u)+2$, $b_2(t,u)=(5+\cos(t)+\Ne^{-t})u$, $b_1(t,u)=(4+\Ne^{-t})u\Ne^{\frac{\sin(u)}{1+u^2}}$, $F_1(u)=F_2(u)=u$, $c_{1211}(t)=\frac{1}{3}\cos(t)+\Ne^{-t}$, $c_{2111}(t)=\frac{2}{3}\sin(t)+\Ne^{-t}$, $h_{1211}(u,v)=h_{2111}(u,v)=\tanh(u)$, $\tau_{121}(t)=|\sin(t)|$, $\tau_{211}(t)=|\cos(t)|$, $I_1(t)=\Ne^{\sin(t)}+\Ne^{-t}$, $I_2(t)=\cos(t)+\Ne^{-t}$, and $c_{11j1}(t)=c_{22j1}(t)=d_{ijl1}(t)=0$.}}

\begin{figure}[ht]		
	\centering
	{\includegraphics[width=15cm]{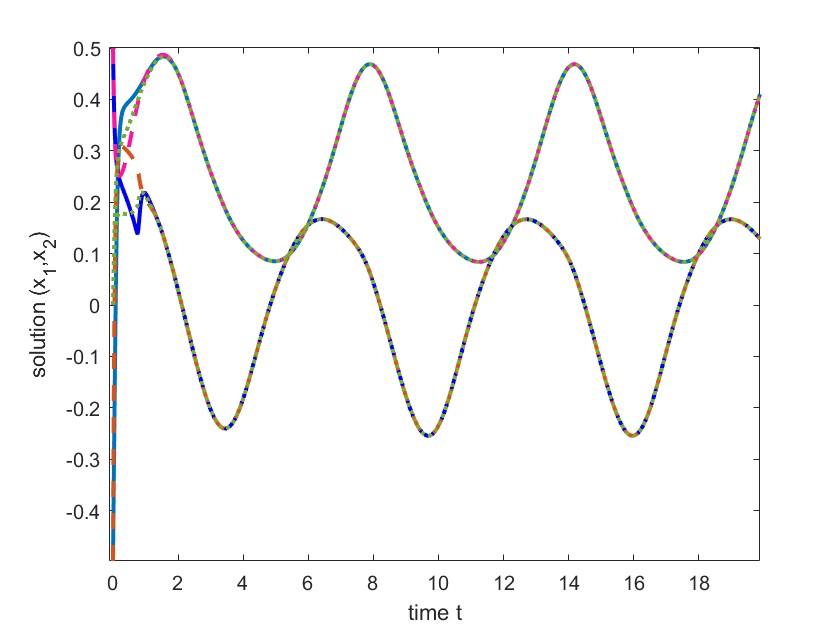}}
	\caption{{{Numerical simulation of three solutions $(x_1(t),x_2(t))$ of system \eqref{non-periodic ex}, with initial condition $\varphi(s)=(-\Ne^s/2,\cos(s)/2)$, $\varphi(s)=(\cos(s)/2,-\Ne^s/2)$, $\varphi(s)=(\sin(s),\Ne^s-1)$ for $s\leq0$, respectively, at $t_0=0$.}}}\label{figura1}
\end{figure}
\begin{figure}[ht]	
	
	\centering
	{\includegraphics[width=14cm]{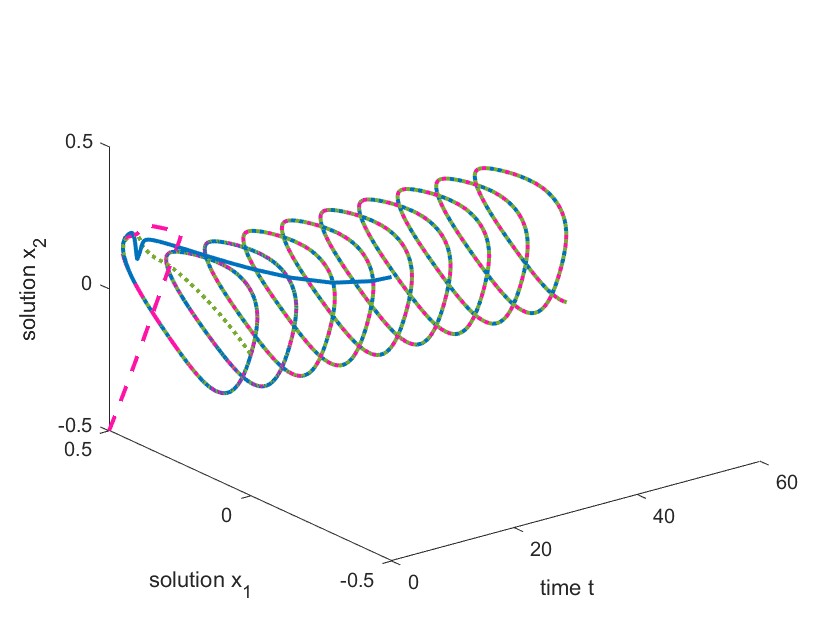}}
	\caption{{{Numerical simulation of three solutions $(x_1(t),x_2(t))$ of system \eqref{non-periodic ex}, with initial condition $\varphi(s)=(-\Ne^s/2,\cos(s)/2)$, $\varphi(s)=(\cos(s)/2,-\Ne^s/2)$, $\varphi(s)=(\sin(s),\Ne^s-1)$ for $s\leq0$, respectively, at $t_0=0$.}}}\label{figura2}
\end{figure}

System \eqref{non-periodic ex} has the following system 
\begin{align}\label{non-periodic ex2}	 
	\hat{x}_1'(t)=&\left(\sin\big(\hat{x}_1(t)\big)+2\right)\bigg[-4x_1(t)\Ne^{\frac{\sin(\hat{x}_1(t))}{1+\hat{x}_1(t)^2}}+\frac{1}{3}\cos(t)\tanh\big(\hat{x}_2(t-|\sin(t)|)\big)+\Ne^{\sin(t)}\bigg]\nonumber\\
	\\
	\hat{x}_2'(t)=&\left(\cos\big(\hat{x}_2(t)\big)+2\right)\bigg[-(5+\cos(t))\hat{x}_2(t) +\frac{2}{3}\sin(t)\tanh\big(\hat{x}_1(t-|\cos(t)|)\big)+\cos(t)\bigg]\nonumber
\end{align}
as one of its asymptotic systems. As hypotheses H1.-H7. hold, Theorem \ref{convegrence of solutions theorem} allowed us to conclude that
$$
  \lim_{t\to+\infty}|x(t)-\hat{x}(t)|=0,
$$
for all  solutions $x(t)=(x_1(t),x_2(t))$ and $\hat{x}(t)=(\hat{x}_1(t),\hat{x}_2(t))$, of \eqref{non-periodic ex} and \eqref{non-periodic ex2} respectively,	with bounded initial conditions. Noting that \eqref{non-periodic ex2} is a $2\pi-$periodic system, \cite[Theorem 5.2]{submitted} assures that \eqref{non-periodic ex2} has a $2\pi-$periodic solution $\tilde{x}(t)=(\tilde{x}_1(t),\tilde{x}_2(t))$. Thus 
$$
  \lim_{t\to+\infty}|x(t)-\tilde{x}(t)|=0,
$$
for all solutions $x(t)$ of \eqref{non-periodic ex} with bounded initial conditions. We stress that all solutions of \eqref{non-periodic ex} converge asymptotically to $\tilde{x}(t)$ which is not a solution of \eqref{non-periodic ex}.

Figures \ref{figura1} and \ref{figura2} show the plot of three solutions $(x_1(t),x_2(t))$ of system \eqref{non-periodic ex}, with initial condition $\varphi(s)=(-\Ne^s/2,\cos(s)/2)$, $\varphi(s)=(\cos(s)/2,-\Ne^s/2)$, $\varphi(s)=(\sin(s),\Ne^s-1)$ for $s\leq0$, respectively, at $t_0=0$.
\section{Conclusion}
In this paper, we have presented sufficient conditions for the global convergence of asymptotic systems in high-order Cohen--Grossberg neural network (CGNN) models with infinite discrete time-varying and distributed delays, as shown in Theorem \ref{convegrence of solutions theorem}. This represents a significant extension of the previous results found in \cite{oliveira2017convergence,xiao+zhang,yuan+huang+hu+liu,yuan+yuan+huang+hu,zhou+li+zhang}.

Model \eqref{1} is general enough to include low-order CGNN models, and a global attractive criterion is obtained for the CGNN model \eqref{eq:static-model with k}, as stated in Corollary \ref{cor-crit-static-model}. This new result enhances the previous criterion found in \cite{ncube2020existence} for the static neural network model \eqref{eq:static-model-INcube}.  In Example \ref{high-order-ex}.2, we derived sufficient conditions for all solutions of the high-order CGNN model \eqref{model-studied in our work}, with bounded initial conditions, to converge to a periodic function as time goes to infinity, as detailed in Corollary \ref{cor 4.4}. It is relevant to note that 
 \cite[Theorem 5.2]{submitted} cannot be applied to prove the existence of a periodic solution of \eqref{model-studied in our work} because \eqref{model-studied in our work} is not necessarily a periodic model.
 
 Finally, in Example  4.3, we obtain sufficient conditions for all solutions of CGNN model \eqref{eq:L-O-Cohen-Grossberg-model}, presented in \cite{xiang+cao}, with bounded initial conditions to converge to an almost periodic function as times goes to infinity, as stated in Theorem \ref{teo-almost-periodic-model}. It is worth noting that the coefficients in \eqref{eq:L-O-Cohen-Grossberg-model} are not necessarily almost periodic.

\noindent
{\bf Acknowledgments.}\\
This work was partially supported by Funda\c{c}\~ao para a Ci\^encia e a Tecnologia (Portugal) within the Projects UIDB/00013/2020, UIDP/00013/2020 of CMAT-UM (Jos\'e J. Oliveira), and Project UIDB/00212/2020 of CMA-UBI (A. Elmwafy and C\'esar M. Silva).

\nocite{*}

\end{document}